\newcommand{\ak}[1]{\textcolor{black}{#1}}
\newcommand{\rev}[1]{\textcolor{black}{#1}}
\def\thanks#1{\protected@xdef\@thanks{\@thanks
        \protect\footnotetext{#1}}}
\begin{document}

\title{Closed-loop Analysis of ADMM-based\\ Suboptimal Linear Model Predictive  Control}

\date{}
\author{Anusha Srikanthan{$^{*\dagger}$}, Aren Karapetyan{$^{*\ddagger}$}, Vijay Kumar{$^\dagger$}, Nikolai Matni{$^\dagger$}\thanks{{*-Equal contribution.}}\thanks{This research is supported by the Swiss National Science Foundation under NCCR Automation (grant agreement 51NF40\_180545), as well as by NSF awards CPS-2038873, SLES-2331880, and NSF CAREER award ECCS-2045834.} \\
{$^\dagger$}University of Pennsylvania, PA, USA\\
{$^\ddagger$}ETH Zürich, Switzerland\\
\tt\small \{sanusha, kumar, nmatni\}@seas.upenn.edu\\
\tt\small akarapetyan@ethz.ch%
}

\maketitle

\begin{abstract}
Many practical applications of optimal control are subject to real-time computational constraints.  When applying model predictive control (MPC) in these settings, respecting timing constraints is achieved by limiting the number of iterations of the optimization algorithm used to compute control actions at each time step, resulting in so-called suboptimal MPC.  This paper proposes a suboptimal MPC scheme based on the alternating direction method of multipliers (ADMM).  With a focus on the linear quadratic regulator problem with state and input constraints, we show how ADMM can be used to split the MPC problem into iterative updates of an unconstrained optimal control problem (with an analytical solution), and a dynamics-free feasibility step. We show that using a warm-start approach combined with enough iterations per time-step, yields an ADMM-based suboptimal MPC scheme which asymptotically stabilizes the system and maintains recursive feasibility. 
\end{abstract}


\section{Introduction}
\label{sec:introduction}
The applicability of model predictive control (MPC) to time and safety critical  applications~\cite{farina2018hierarchical, minniti2021adaptive, sun2022comparative, rosolia2022unified} is often limited by its high computational cost. 
To implement MPC, at each time step a constrained optimal control problem (OCP) \ak{needs to} be solved over a fixed horizon. However, in practice, in the presence of computational limits, OCPs are solved approximately using numerical solvers running iterative algorithms up to a given accuracy \cite{liao2020time, zanelli2021lyapunov, karapetyan2023finite, srikanthan2023augmented}. In this work, we study the stability of the resulting closed-loop \emph{system-optimizer dynamics}, in which we explicitly account for the finite number of alternating direction method of multipliers (ADMM) \cite{boyd2011distributed} steps used to solve the constrained OCP. In doing so, we extend the results in \cite{schulze2021closed} to a much wider class of problems, in particular allowing for active state and input constraints.

The closed-loop stability of suboptimal or approximate MPC schemes is studied in the context of gradient-based solvers running a fixed number of iterations in \cite{zanelli2021lyapunov, liao2020time}, wherein local asymptotic stability is shown  when sufficiently many gradient descent steps are performed per OCP solve. Later works \cite{liao2021analysis, leung2021computable} derive less stringent conditions  for the linear quadratic regulator (LQR) setting and no state constraints, with the computational time-optimality tradeoff analyzed in \cite{karapetyan2023finite}.  An alternative strategy towards handling the computational bottleneck of constrained optimization involves the use of layering~\cite{matni2016theory,matni2024towards} or splitting methods~\cite{o2013splitting, srikanthan2023augmented} where complex constraints are separated into different layers, in-line with the practical needs of  hardware implementation. The authors in~\cite{o2013splitting} propose an ADMM-based split to solve the OCP but do not consider its closed-loop properties. Such an analysis, with a fixed number of ADMM iterations, is carried out  in \cite{schulze2021closed}. However, the dynamics are only considered within a small enough region of attraction (ROA) such that no constraints activate, leading to linear closed-loop dynamics. In this work, we extend the results of \cite{schulze2021closed}, by considering the combined nonlinear system-optimizer dynamics, in the spirit of time-distributed optimization \cite{liao2020time}, and by making use of linear convergence results for the ADMM algorithm \cite{nishihara2015general}.
We also note the rich body of work~\cite{east2018admm, rey2020admm, gracia2024efficient, krupa2024sparse} that explores the use of ADMM structure to improve the computational efficiency of solving constrained OCPs for MPC, but does not analyze the closed-loop properties. Furthermore, none of the papers consider the problem of early termination of ADMM, nor the effect of the resulting suboptimality on closed-loop stability of the system. 

Inspired by the convergence results for ADMM \rev{in} static optimization problems~\cite{nishihara2015general} and by the advantages of layered control architectures \cite{matni2024towards}, we consider ADMM as an operator splitting method for solving the LQR problem with dynamics, state, and input constraints. We propose solving the MPC OCP using ADMM for a fixed number of iterations, and applying feasible inputs in a receding horizon fashion. We show, that the resulting nonlinear \rev{policy} asymptotically stabilizes the system and stays recursively feasible if enough iterations are performed per OCP solve. Our contributions are as follows
\begin{enumerate} 
    \item 
    We propose a computationally efficient approximate MPC using ADMM as an operator splitting method for constrained linear quadratic regulator (LQR) problems without terminal set constraints. 
    \item To the best of our knowledge, we present the first proof of local asymptotic stability and recursive feasibility for ADMM-based MPC with finite number of iterations \rev{considering the nonlinear system-optimizer dynamics}. 
\end{enumerate} 
\vspace{-0.2mm}
\rev{To show the above, we derive a forward invariant region of attraction (ROA) for the combined system-optimizer dynamics, inspired by \cite{leung2021computable}. As a result, we provide an explicit but   conservative estimate for the number of ADMM iterations required for asymptotic stability.} We provide simulations to \rev{show that in practice a lower number of iterations suffices,} and in particular\rev{, that our results can be leveraged} to explore computation-performance tradeoffs. We conclude the article by discussing possible extensions of the work to nonlinear systems and reference tracking.

\textbf{Notation:}
The space of $n$-dimensional real numbers is denoted by $\mathbb{R}^n$ and that of positive definite  and semidefinite matrices by $\bbS_{++}^{n}$ and $\bbS_{+}^{n}$, respectively. Given a matrix $A$, we denote its largest and smallest eigenvalues \ak{by} $\lambda^+(A)$ and $\lambda^{-}(A)$, and singular values by $\sigma^{+}(A)$ and $\sigma^{-}(A)$, respectively. The Euclidean norm of a given vector $x \in \R^n$ induced by a matrix $M \in \bbS_{+}^{n}$ is denoted by $\| x\|_M$, i.e., $\|x\|_M^2 = x^\top M x$. \rev{The Kronecker product is denoted by $\kron$}. The indicator function of a vector $x$ on a set $\calX$ is defined by
\begin{equation*}
    \mathbb{I}_{\calX}(x) := \begin{cases}
        0, & \text{if } x \in \calX,\\
        \infty, & \text{\ak{otherwise}}.
    \end{cases}
\end{equation*}

\section{Problem Formulation}
Consider a linear time-invariant system of the form
\begin{equation}\label{eq:lti-sys}
    x_{t+1} = Ax_t + Bu_t,
\end{equation}
where $A \in \mathbb{R}^{n \times n},\ B \in \mathbb{R}^{n \times m}$ are system matrices, $x_t \in \mathbb{R}^n$ is the state vector, and $u_t \in \mathbb{R}^m$ is the control input. Our objective is to design a controller to stabilize the system~\eqref{eq:lti-sys} to the origin while satisfying the constraints $u_t \in \mathcal{U}$ and $x_t \in \mathcal{X}$ for all $t \geq 0$, where $\mathcal{U} \subseteq \mathbb{R}^m$ and $\mathcal{X} \subseteq \mathbb{R}^n$.  To this end, given some cost matrices $Q \in \mathbb{S}_{++}^n$  and $R \in \mathbb{S}_{++}^m$, we formulate the following parametric optimal control problem (POCP) over a horizon $N>0$, parameterized by some initial state $\xi \in \calX$ 
\begin{equation}\label{prob:lqr}
\begin{split}
    \mu^\star(\xi): = &\argmin_{\nu_{0},\hdots,\nu_{N-1}} 
    \xi_{N}^T P \xi_{N} +\sum_{i=0}^{N-1} \xi_{i}^T Q \xi_{i} + \nu_{i}^T R \nu_{i} \\
      &\qquad\text{s.t. }  \xi_{i+1} = A\xi_{i} + B\nu_{i}, \; \forall i= 0, \hdots, N-1  \\
     &\qquad \hphantom{\text{s.t. }}\xi_{i} \in \mathcal{X},\nu_{i} \in \mathcal{U}, \; \forall i= 0, \hdots, N-1\\
     &\qquad \hphantom{\text{s.t. }}\xi_{0} = \xi,\ \xi_{N} \in \calX, 
\end{split}
\end{equation}
where $\xi_{i} \in \mathbb{R}^n, \nu_{i}\in \mathbb{R}^m$, and $P$ solves the discrete-time algebraic Riccati equation
\begin{equation}\label{eq:dare}
    P = Q + A^T P A - K^TB^TPA,
\end{equation} 
with $K = (R + B^T P B)^{-1} B^T P A$. The solution  to \eqref{prob:lqr}, $\mu^\star : \mathcal{X} \rightarrow \mathbb{R}^{Nm}$, maps the initial state to the solution vector. 
We consider an MPC formulation, where at each time $t$, the  controller measures the current state, solves \eqref{prob:lqr} and applies the first input, resulting in the following closed-loop dynamics
\begin{equation}\label{eq:optimal_closed_loop}
    x^\star_{t+1} = Ax^\star_t + B\Xi \mu^\star(x^\star_t)=: f(x^\star_t),
\end{equation}
where $\Xi = \begin{bmatrix}
    I_{m} & 0_{m(N-1)}
\end{bmatrix} \in \R^{mN}$ is a selector matrix, choosing the first input.
 
 We make the following assumptions in order for the problem to be well-defined and convex.

\begin{assumption}\label{assum:riccati}
    The pair $(A, B)$ is stabilizable, the cost matrices satisfy $R \in \mathbb{S}_{++}^m$, and $Q \in \mathbb{S}_{++}^n$ (hence $(A, Q^{1/2})$ is detectable).
\end{assumption}

\begin{assumption}\label{assum:closed-mpc}
    The sets $\mathcal{X}$ and $\mathcal{U}$ \rev{are closed polytopes that contain the origin}.
\end{assumption}

Since problem~\eqref{prob:lqr} is often expensive to solve exactly, we consider its approximate solution with a splitting method as we detail in the next section. To this aim, we reformulate the POCP \eqref{prob:lqr} in the following equivalent form 
\begin{equation}\label{prob:condensed-lqr}
    \begin{array}{rl}
        V_N^\star(\xi) = \min\limits_{\boldsymbol{u,r}} &J( \boldsymbol{u}) + g(\xi, \boldsymbol{r}) \\
        \text{subject to } &\boldsymbol{u} - \boldsymbol{r} = 0, 
    \end{array}
\end{equation}
by  defining $\boldsymbol{u} = \left[\xi_0^\top, \nu_{0}^\top, \cdots, \nu_{N-1}^\top\right]^\top \in \R^{s}$, and introducing a redundant variable $\boldsymbol{r}\in \mathbb{R}^{s}$, where $s:=n+Nm$. The optimization problem~\eqref{prob:condensed-lqr} is composed of an objective defined by a sum of two functions, and a consistency constraint. The first function
 $J(\boldsymbol{u}) = \Vert \boldsymbol{u} \Vert_{M}^2$ where $M = \begin{bmatrix}
    W & G^T \\
    G & H
\end{bmatrix}$ with $W \in \bbS_{++}^n,\ G \in \R^{Nm \times n},\ H \in \bbS_{++}^{Nm}$ defined as in~\cite[Appendix A]{liao2021analysis} depend on the dynamics and cost matrices, but not on the constraints, and is a compact representation of the objective of problem~\eqref{prob:lqr}. The second function
\begin{equation*}
    g(\xi, \boldsymbol{r}) = \mathbb{I}_{\calU^N}(\boldsymbol{r}) + \mathbb{I}_{\calX^{N}}(\boldsymbol{r}) + \mathbb{I}_{F(\xi)}(\xi_0)
\end{equation*}
enforces the state and input constraints, where \rev{$ \bar{\calX} := {\calX} \times \cdots \times {\calX} \subseteq \R^{nN}$}, $\calU^N := \rev{\mathcal{X}} \times \calU \cdots \times \calU \subseteq \R^{s}$, \rev{${\calX}^N:= \{\boldsymbol{r}\in \mathbb{R}^s\ | \ \bar{A}\boldsymbol{r}\in \bar{\calX}\}$}, and  $F(\xi):= \{x\in \mathbb{R}^n \ | \ \xi-x =0\}$ and 
$$\bar{A} = \begin{bmatrix}
    \rev{A} & \rev{B} & 0 & \cdots & 0 \\
    A^2 & \rev{AB} & \rev{B} & \cdots & 0 \\
    \vdots & \vdots & \ddots & \ddots & 0 \\
    A^N & A^{N-1}B & \cdots & AB & B 
\end{bmatrix}.$$

As we show below, considering the equivalent problem~\eqref{prob:condensed-lqr} allows for a suboptimal solution of \eqref{prob:lqr} via ADMM wherein one update step solves a constraint-free LQR problem, and the other solves a dynamics-free constraint satisfaction problem.

\subsection{Suboptimal ADMM-MPC via Operator Splitting}
Solving optimization problem in~\eqref{prob:condensed-lqr} in a receding horizon fashion is computationally expensive for long horizons $N$ due to the presence of both dynamics and feasibility constraints. 

At time $t$, given an initial state $x_t \in \calX$, \rev{such that \eqref{prob:condensed-lqr} is feasible,} we consider the solution to~\eqref{prob:condensed-lqr} with over-relaxed ADMM~\cite{nishihara2015general} as follows
\begin{subequations}\label{eq:linear_ocp_admm}
    \begin{align}
        \boldsymbol{u}_t^{k+1} &\coloneqq \argmin_{\boldsymbol{u}}J( \boldsymbol{u}) + \frac{\rho}{2} \Vert \boldsymbol{u} - \boldsymbol{r}_t^k + \boldsymbol{v}_t^k \Vert_2^2,
        \label{eq:linear-ref-layer} \\
        \boldsymbol{r}_t^{k+1} &\coloneqq \argmin\limits_{\boldsymbol{r}} g(x_t, \boldsymbol{r})\!+\!\frac{\rho}{2} \Vert \alpha \boldsymbol{u}_t^{k+1}\!+\!\bar{\alpha}\boldsymbol{r}_t^k\!-\!\boldsymbol{r} + \boldsymbol{v}_t^k \Vert_2^2, 
        \label{eq:linear-control-update} \\
        \boldsymbol{v}_t^{k+1} &\coloneqq \boldsymbol{v}_t^k +  \alpha \boldsymbol{u}_t^{k+1} + (1-\alpha) \boldsymbol{r}_t^k - \boldsymbol{r}_t^{k+1},
        \label{eq:linear-dual-update}
    \end{align}
\end{subequations}
where the superscript denotes the ADMM iterations, $\rho, \alpha >0$ are tunable algorithmic parameters, $\boldsymbol{u, r}$ are primal variables, $\boldsymbol{v}$ is a scaled dual variable and $\bar{\alpha}:=1-\alpha$. The iterations start from the initial state $x_t\in \mathcal{X}$ and an initial guess  \ak{$\boldsymbol{u}_t^0\in \mathbb{R}^{s}, \boldsymbol{r}_t^0\in \mathbb{R}^{s}$, and $\boldsymbol{v}_t^0\in \mathbb{R}^{s}$}. 
We highlight here that our decomposition is such that~\eqref{eq:linear-ref-layer} has only dynamics constraints \rev{and has an explicit solution}. \rev{The step in~\eqref{eq:linear-control-update} includes only feasibility constraints and is separable across timesteps in the absence of state constraints \cite{o2013splitting}; for completeness we provide an analysis with state constraints, since one can leverage the sparse structure of the quadratic program \eqref{eq:linear-control-update} for a more efficient solution or extend the analysis to consider asynchronous ADMM updates, although we do not explore these prospects in this work.} \rev{The dual update~\eqref{eq:linear-dual-update} ensures consistency between the $\boldsymbol{u}_t^k$  and $\boldsymbol{r}_t^k$ iterates.} Several prior works~\cite{o2013splitting, srikanthan2023augmented, sindhwani2017sequential} have \rev{demonstrated} that using the ADMM algorithm to solve OCPs leads to good practical performance and empirical convergence \rev{even in the presence of state constraints. In our work, however, to leverage the linear-convergence of the ADMM algorithm~\cite{nishihara2015general} and ensure that the smoothness condition holds, we rewrite~\eqref{eq:linear-control-update} without introducing state variables.}

We therefore propose an algorithm design where, at each time step, we  solve~\eqref{prob:condensed-lqr} approximately by running $\ell$ iterations of~\eqref{eq:linear_ocp_admm} and apply the first input of the safe input vector $\boldsymbol{r}_t^\ell$. Let $\mathcal{T}$ denote the one-step  ADMM operator, i.e. the operator consisting of one pass of equations~\eqref{eq:linear_ocp_admm} such that $(\boldsymbol{r}_t^{1}, \boldsymbol{y}_t^{1}) \coloneqq \mathcal{T}(\boldsymbol{r}_t^0, \boldsymbol{y}_t^0; x_t)$ where $\boldsymbol{y}_t^{0} = \rho \boldsymbol{v}_t^{0}$ (an unscaled dual variable). The $\ell$-step operator is then defined by recursion, as
$
    \mathcal{T}^{\ell}( \boldsymbol{r}^0, \boldsymbol{y}^0; x_t) := \mathcal{T}(\mathcal{T}^{\ell-1}( \boldsymbol{r}^0, \boldsymbol{y}^0; x_t); x_t)
$, with the initialization  $\mathcal{T}^0(\boldsymbol{r}, \boldsymbol{y}; \xi) := \left[ \boldsymbol{r}^\top, \boldsymbol{y}^\top\right]^\top $ for any $\boldsymbol{r}\in\mathbb{R}^{s},\boldsymbol{v}\in\mathbb{R}^{s}$ and $\xi\in\calX$. When running the ADMM algorithm in a receding horizon fashion,
one has an estimate for the initial values of the optimization variables from the previous run. In particular, at each time $t$, one can initialize the algorithm with $(\boldsymbol{r}_{t-1}^\ell, \boldsymbol{y}_{t-1}^\ell)$. Doing so, and applying the first safe input from $\boldsymbol{r}_t^\ell$ results
in the following coupled \emph{system-optimizer closed-loop dynamics with warmstart}:
\begin{subequations}
\label{eq:suboptimal_closed-loop2}
\begin{align}
x_{t+1} &= Ax_t + \bar{B}{\boldsymbol{\varphi}}_t^\ell,\\
        \boldsymbol{\varphi}^\ell_{t+1} &= \mathcal{T}^\ell(\boldsymbol{\varphi}^\ell_{t}; x_t),
\end{align}
\end{subequations}
starting with some initialization $\boldsymbol{\varphi}^\ell_0 = \left[ \boldsymbol{r}_{0}^\top, \boldsymbol{y}_{0}^\top\right]^\top \in \mathbb{R}^{2s}$, and defining  $\bar{B}:= B \Xi_\phi$ with $\Xi_\phi :=  \begin{bmatrix}
    0_n & I_{m} & 0_{mN+s}
\end{bmatrix} $.
As the constraints are imposed in~\eqref{eq:linear-control-update}, applying control inputs from $\boldsymbol{r}_t^\ell$ results in a feasible input by construction.

\ak{It is well known \cite{boyd2011distributed, nishihara2015general}, that under Assumptions \ref{assum:riccati}-\ref{assum:closed-mpc}, if \eqref{eq:linear_ocp_admm} is run to convergence, the solution coincides with the optimal primal and dual variables  of \eqref{prob:condensed-lqr}. Given an initial state $\xi\in \mathcal{X}$, and any $\boldsymbol{\varphi}_0^\ell\in\mathbb{R}^{2s}$, the corresponding optimal solution vector is
\begin{equation}\label{eq:phi_star}
    \boldsymbol{\varphi^\star}(\xi)= \left[\boldsymbol{r}^{\star\top}(\xi), \boldsymbol{y}^{\star\top}(\xi)\right]^\top:=\lim_{\ell\rightarrow\infty}\mathcal{T}^\ell(\boldsymbol{\varphi}_0^\ell;\xi).
\end{equation}}
The following proposition from \cite{nishihara2015general} shows the linear convergence of the proposed ADMM scheme.
\begin{proposition}\label{prop:admm-convergence-thm6}\cite[Theorem 6]{nishihara2015general}
    Let Assumptions~\ref{assum:riccati}-\ref{assum:closed-mpc} hold, and suppose at time $t$, ${\boldsymbol{\varphi}}_t^\ell= \left[ \boldsymbol{r}_{t}^{\ell\top}, \boldsymbol{y}_{t}^{\ell\top}\right]^\top$ is generated by running the optimization~\eqref{eq:linear_ocp_admm} with  $\alpha \in (0, 2)$ and a step size $\rho = (pL)^{1/2} \kappa^\epsilon$ where $\kappa = L/p$, $p=2\sigma^-(M), L = 2\sigma^+(M)$ and  $\epsilon \in \R$. \rev{Then, for a sufficiently large $\kappa$, and any initial state $x_t\in \mathcal{X}$ such that \eqref{prob:condensed-lqr} 
 is feasible,} 
 the following holds with convergence rate $\tau = 1 - \frac{\alpha}{2 \kappa^{0.5 + |\epsilon|}} \in (0,1)$,
    \begin{equation*}
    \left \|{\boldsymbol{\varphi}}_t^\ell - {\boldsymbol{\varphi}}^\star(x_{t}) \right \|_{F} \leq \tau^{\ell}  \left\|{\boldsymbol{\varphi}}_{t-1}^\ell -  {\boldsymbol{\varphi}}^\star(x_{t}) \right \|_{{F}},
    \end{equation*}
    where $F = \begin{bmatrix}
    1 & 1 - \alpha \\
    1 - \alpha & 1
\end{bmatrix} \kron I_{2s}$.
\end{proposition}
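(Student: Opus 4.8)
The plan is to recognize the statement as a direct specialization of the general over-relaxed ADMM convergence analysis of \cite{nishihara2015general}, so that the proof reduces to casting \eqref{prob:condensed-lqr} into their canonical two-block form and verifying their structural hypotheses; the stated rate $\tau$ and metric $F$ then follow by invoking their Theorem~6. Concretely, I would match \eqref{prob:condensed-lqr} to the template $\min_{\boldsymbol u,\boldsymbol r} f(\boldsymbol u)+h(\boldsymbol r)$ subject to $\hat A\boldsymbol u+\hat B\boldsymbol r=0$ by setting $f=J$, $h=g(x_t,\cdot)$, and $\hat A=I_s$, $\hat B=-I_s$, and then checking that \eqref{eq:linear_ocp_admm} is \emph{exactly} their over-relaxed scheme: with $\hat B=-I_s$ and $c=0$ the relaxed term becomes $\alpha\boldsymbol u^{k+1}+(1-\alpha)\boldsymbol r^{k}$, which is precisely \eqref{eq:linear-control-update}, and the dual step \eqref{eq:linear-dual-update} matches after the change of variables $\boldsymbol y=\rho\boldsymbol v$ from scaled to unscaled multipliers.

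The substantive content is the hypothesis verification. First I would establish that $J$ is $p$-strongly convex with $p=2\sigma^-(M)$ and has $L$-Lipschitz gradient with $L=2\sigma^+(M)$: since $J(\boldsymbol u)=\boldsymbol u^\top M\boldsymbol u$ is quadratic with Hessian $2M$, both constants are read off the spectrum of $M$, \emph{provided} $M\in\bbS_{++}^s$ so that $\sigma^-(M)>0$ and the condition number $\kappa=L/p$ is finite. This positive definiteness is the one claim that genuinely uses the MPC structure, and it follows from $W,H\in\bbS_{++}$ together with the Schur-complement form of $M$ inherited from the Riccati construction under Assumption~\ref{assum:riccati}. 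Next, $h=g(x_t,\cdot)$ is proper, closed, and convex: each summand is the indicator of a closed convex polytope (Assumption~\ref{assum:closed-mpc}) or of the affine set $F(x_t)$, and convexity, closedness, and properness are preserved under finite sums — properness being exactly the standing feasibility assumption on $x_t$. Finally, $\hat A=I_s$ is invertible, meeting the full-rank requirement their analysis places on the first-block constraint matrix.

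With the hypotheses in place, I would apply their Theorem~6 with the prescribed parameters $\alpha\in(0,2)$ and $\rho=(pL)^{1/2}\kappa^\epsilon$, which is the step-size rule under which their IQC/Lyapunov certificate yields the closed-form one-step contraction factor $\tau=1-\tfrac{\alpha}{2\kappa^{0.5+|\epsilon|}}\in(0,1)$ together with the distinguished metric $F=\begin{bmatrix}1&1-\alpha\\1-\alpha&1\end{bmatrix}\kron I_{2s}$. Their result certifies that $\mathcal T(\cdot;x_t)$ is a $\tau$-contraction toward its fixed point in the $F$-norm, i.e.\ $\|\mathcal T(\boldsymbol\varphi;x_t)-\boldsymbol\varphi^\star(x_t)\|_F\le\tau\,\|\boldsymbol\varphi-\boldsymbol\varphi^\star(x_t)\|_F$. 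The claimed bound is then obtained by composing this contraction $\ell$ times along $\boldsymbol\varphi_t^\ell=\mathcal T^\ell(\boldsymbol\varphi_{t-1}^\ell;x_t)$, using that the fixed point of $\mathcal T(\cdot;x_t)$ coincides with $\boldsymbol\varphi^\star(x_t)$ from \eqref{eq:phi_star}, which is well-defined and initialization-independent by strong convexity of $J$.

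I expect the main obstacle to lie not in the reduction but in two places: (i) rigorously confirming $M\in\bbS_{++}^s$, since $\kappa$ and hence $\tau$ are meaningless if $\sigma^-(M)=0$; and (ii) the ``sufficiently large $\kappa$'' caveat, as the clean rate in Theorem~6 is the leading-order term of a large-$\kappa$ analysis, so I would need to confirm that the problem's condition number lies in the regime where their closed-form bound applies, or else fall back on the exact SDP-certified rate. Were one to reprove the statement from scratch rather than cite, the genuinely hard step would be reconstructing their IQC/Lyapunov argument — exhibiting a quadratic storage function in the $F$-metric and discharging the associated linear matrix inequality — which is exactly what invoking Theorem~6 lets us sidestep.
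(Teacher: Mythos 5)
Your proposal is correct and follows essentially the same route as the paper: both reduce the claim to \cite[Theorem~6]{nishihara2015general} by verifying that $J$ is $p$-strongly convex and $L$-smooth (since $M\succ 0$), that $g(x_t,\cdot)$ is proper and convex, and that the coupling-constraint matrices are identities and hence invertible, with the ``sufficiently large $\kappa$'' caveat handled exactly as you note via the feasibility of the associated LMI (cf.\ Appendix~\ref{app:prop1}).
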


The proof follows directly from \rev{\cite[Thm. 6]{nishihara2015general}}  
by noting that all assumptions therein are satisfied. Namely, $J(\boldsymbol{u}) = \|\boldsymbol{u}\|_M^2$ is $p-$strongly convex and $L$-smooth since $M\succ 0$, the second function $g$ is proper and convex, and the matrices multiplying the decision variables in the coupling constraint $\boldsymbol{u} - \boldsymbol{r}$ are both identity, and hence invertible.
\begin{remark}
    The condition on the size of $\kappa$ is equivalent to the existence of a solution to a \rev{linear matrix inequality (LMI) in \cite[Thms. 6, 7]{nishihara2015general}.} 
    Since the aspects of ADMM intrinsic design  are not the focus of this work, for the rest of the paper we assume the parameters $\alpha$ and $\rho$ are picked as in Proposition \ref{prop:admm-convergence-thm6}, and $Q$ and $R$ are such that $\kappa$ is sufficiently large. We provide specific details on how to use the LMI for finding $\alpha$ for our problem parameters in Appendix Section~\ref{app:prop1}. 
\end{remark}

We use the linear convergence of ADMM in Section~\ref{sec:closed-loop-analysis}, to show the asymptotic stability of the closed-loop dynamics \eqref{eq:suboptimal_closed-loop2} for large enough $\ell$.

\section{Optimal MPC Properties}

\ak{In this section, we outline key properties of the optimal MPC policy \eqref{prob:lqr} and the resulting closed-loop system \eqref{eq:optimal_closed_loop}.}

A key property in the proof of the stability of the suboptimal closed-loop system~\eqref{eq:suboptimal_closed-loop2} is the Lipschitz continuity of the optimal solution mapping~\eqref{prob:lqr}. To show it, consider the Lagrangian of the optimization problem~\eqref{prob:condensed-lqr} for some initial state $\xi \in \mathcal{X}$ 
\begin{equation*}
   \mathcal{L} (\boldsymbol{u}, \boldsymbol{r}, \boldsymbol{y},\boldsymbol{\lambda}; \xi) = {J}(\boldsymbol{u}) +\boldsymbol{y}^\top (\boldsymbol{u} - \boldsymbol{r}) + \boldsymbol{\lambda}^\top (\xi-\xi_0),
\end{equation*}
where $\boldsymbol{\lambda}\in \mathbb{R}^n$ and $\boldsymbol{y}\in\mathbb{R}^s$ are dual variables. Let $\boldsymbol{z}:= \left[\boldsymbol{u}^\top, \boldsymbol{r}^\top, \boldsymbol{y}^\top, \boldsymbol{\lambda}^\top\right]^\top$, then the following KKT condition is sufficient for optimality of a given $\boldsymbol{z}$
\begin{equation}\label{eq:KKT_condition}
    \nabla_{\boldsymbol{z}}\mathcal{L}(\boldsymbol{z}, \xi) + \mathcal{N}_\mathcal{Z}(\boldsymbol{z})\ni 0,
\end{equation}
where we define $\mathcal{Z}:= \mathbb{R}^{s} \times \mathcal{W} \times \mathbb{R}^{s} \times \mathbb{R}^n$, $\mathcal{W}:=\mathcal{U}^N \cap \mathcal{X}^N$ and $\mathcal{N}_{\mathcal{Z}}$ denotes the normal cone mapping for the set $\mathcal{Z}$ \rev{\cite{liao2020time}}.
We denote the set of solutions satisfying \eqref{eq:KKT_condition} by
\begin{equation} \label{eq:optimal_S}
    S(\xi):= \{\boldsymbol{z}\;|\;\nabla_{\boldsymbol{z}}\mathcal{L}(\boldsymbol{z}, \xi) + \mathcal{N}_\mathcal{Z}(\boldsymbol{z})\ni 0\}.
    \end{equation}
\rev{For the optimal solution of~\eqref{prob:condensed-lqr}, the KKT condition~\eqref{eq:optimal_S} is necessary to hold since  $J(\boldsymbol{u})$ is convex with affine constraints (e.g. \cite[Prop. 3.4.1]{bertsekas1997nonlinear}).} The necessity \rev{of~\eqref{eq:KKT_condition}}, combined with the strong regularity condition~\cite{robinson1980strongly} defined below is used to show that every optimal primal/dual solution to the POCP~\eqref{prob:condensed-lqr} is Lipschitz continuous \rev{as a function of} the parameter $\xi$.

\begin{definition}
A set-valued mapping $\Psi : \mathbb{R}^n \rightarrow \mathbb{R}^n$ is strongly regular at $x$ for $y$ if $y \in \Psi(x)$ and there exist neighborhoods $U$ of $x$ and $V$ of $y$ such that the truncated inverse mapping $\Tilde{\Psi}^{-1} : V \mapsto \Psi^{-1}(V )\cap U$ is single-valued, i.e., a function, and Lipschitz continuous on $V$.
\end{definition}

Strong regularity is equivalent to \rev{the non-singularity of the Jacobian of the mapping when $\eqref{eq:KKT_condition}$ is an equality \cite{robinson1980strongly}}.
\ak{\begin{theorem}\label{thm:strong-regularity}
    Let Assumptions~\ref{assum:riccati}-\ref{assum:closed-mpc} hold, \rev{and consider} a parameter $\xi \in \mathcal{X}$ \rev{such that \eqref{prob:condensed-lqr} is feasible. Then} the solution mapping $S$ is strongly regular for any point $\bar{\boldsymbol{z}} \in S(\xi)$. Moreover, \rev{each  bounded point $\bar{\boldsymbol{z}} \in S(\xi)$} is Lipschitz continuous for all \rev{feasible} ${\xi} \in \calX$.
\end{theorem}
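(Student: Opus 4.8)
The plan is to recognize the KKT system \eqref{eq:KKT_condition} as a parametric generalized equation $0 \in \Phi(\boldsymbol{z},\xi) + \mathcal{N}_\mathcal{Z}(\boldsymbol{z})$, with $\Phi(\boldsymbol{z},\xi) := \nabla_{\boldsymbol{z}}\mathcal{L}(\boldsymbol{z},\xi)$, and to invoke Robinson's strong regularity theorem \cite{robinson1980strongly}. I would first record the stationarity maps: writing $E$ for the selector with $\xi_0 = E\boldsymbol{u}$, one has $\nabla_{\boldsymbol{u}}\mathcal{L}=2M\boldsymbol{u}+\boldsymbol{y}-E^\top\boldsymbol{\lambda}$, $\nabla_{\boldsymbol{r}}\mathcal{L}=-\boldsymbol{y}$, $\nabla_{\boldsymbol{y}}\mathcal{L}=\boldsymbol{u}-\boldsymbol{r}$, and $\nabla_{\boldsymbol{\lambda}}\mathcal{L}=\xi-E\boldsymbol{u}$. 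Since $\mathcal{L}$ is quadratic in $\boldsymbol{z}=(\boldsymbol{u},\boldsymbol{r},\boldsymbol{y},\boldsymbol{\lambda})$ and affine in $\xi$, the map $\Phi$ is affine in $\boldsymbol{z}$ with \emph{constant} Jacobian, and $\mathcal{N}_\mathcal{Z}$ acts only on the $\boldsymbol{r}$-block through the polytope $\mathcal{W}=\mathcal{U}^N\cap\mathcal{X}^N$; the underlying problem is thus a strongly convex QP in $\boldsymbol{u}$ over a polyhedral feasible set.

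The equality case flagged after the strong-regularity definition is verified by a direct computation: when no inequality is active, $\mathcal{N}_\mathcal{Z}=\{0\}$ and strong regularity reduces to nonsingularity of $\nabla_{\boldsymbol{z}}\Phi$. Solving $\nabla_{\boldsymbol{z}}\Phi\,(a,b,c,d)=0$ forces $c=0$, $a=b$, $Ea=0$, and $2Ma=E^\top d$; using the block structure $M=\bigl[\begin{smallmatrix}W & G^\top\\ G & H\end{smallmatrix}\bigr]$ with $H\succ0$ then yields $a=b=c=d=0$, so $\nabla_{\boldsymbol{z}}\Phi$ is nonsingular. For the active case I would verify the strong second-order sufficient condition on the critical cone despite the rank-deficient Hessian $\nabla^2_{(\boldsymbol{u},\boldsymbol{r})}\mathcal{L}=\mathrm{diag}(2M,0)$: because the consistency constraint $\boldsymbol{u}-\boldsymbol{r}=0$ is active at every feasible point, every critical direction $d=(\delta\boldsymbol{u},\delta\boldsymbol{r})$ obeys $\delta\boldsymbol{u}=\delta\boldsymbol{r}$, whence $d^\top\nabla^2\mathcal{L}\,d=2\|\delta\boldsymbol{u}\|_M^2>0$ for $d\neq0$ by $M\succ0$ (Assumption~\ref{assum:riccati}). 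This positivity holds \emph{independently of the active set}, which together with the above gives strong regularity at every $\bar{\boldsymbol{z}}\in S(\xi)$ and establishes the first claim.

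For the Lipschitz claim, strong regularity yields, via Robinson's implicit-function theorem for generalized equations, a single-valued Lipschitz localization $\xi\mapsto\bar{\boldsymbol{z}}(\xi)$ near $(\bar{\boldsymbol{z}},\xi)$. Since $\Phi$ is affine and $\mathcal{W}$ is polyhedral, the primal minimizer $\boldsymbol{u}^\star(\xi)=\boldsymbol{r}^\star(\xi)$ is unique and globally continuous piecewise-affine in $\xi$, hence globally Lipschitz; patching the local localizations over the finitely many polyhedral regions of constant active set extends a single Lipschitz estimate to all feasible $\xi\in\calX$, giving the second claim for each bounded branch $\bar{\boldsymbol{z}}$.

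The main obstacle is the constraint qualification. The polytopes $\mathcal{U},\mathcal{X}$ need not satisfy LICQ, so the duals $(\boldsymbol{y},\boldsymbol{\lambda})$ may be non-unique and the dual solution set unbounded, which is precisely why the statement restricts the global Lipschitz claim to \emph{bounded} $\bar{\boldsymbol{z}}\in S(\xi)$. I would therefore argue that along a bounded dual branch the linearized generalized equation remains strongly regular—e.g. by restricting $\Phi$ to the subspace spanned by the active constraint gradients selected by $\bar{\boldsymbol{z}}$, on which an effective LICQ is recovered—and that this branch is the one produced by Robinson's localization. The remaining work is bookkeeping: checking that the piecewise-affine patching preserves a uniform Lipschitz constant, which follows from continuity of the primal map and finiteness of the active-set partition.
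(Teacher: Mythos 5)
Your proposal is correct and follows essentially the same route as the paper: you verify the strong second-order sufficient condition by observing that the consistency constraint $\boldsymbol{u}-\boldsymbol{r}=0$ forces any nonzero critical direction to have a nonzero $\boldsymbol{u}$-block, so positive definiteness of $M$ gives positivity despite the rank-deficient Hessian $\mathrm{diag}(M,0)$, and you then obtain Lipschitz continuity of the solution branches from strong regularity (the paper cites Robinson-type results from \cite{liao2020time} and \cite{dontchev2013euler} for exactly these two steps). Your additional explicit Jacobian computation, the piecewise-affine patching argument, and the discussion of non-unique duals are finer-grained than the paper's proof but do not change the underlying argument.
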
}

\begin{proof}
    It follows from \cite[Theorem~7]{liao2020time} that if \rev{for all $\boldsymbol{\zeta} \neq 0$ satisfying $
        \nabla_{(\boldsymbol{u},\boldsymbol{r})} h(\boldsymbol{\bar{u}},\boldsymbol{\bar{r}},\xi)
        \boldsymbol{\zeta} = 0$} 
 \begin{equation}\label{eq:sosc}
        \boldsymbol{\zeta}^\top \nabla_{(\boldsymbol{u},\boldsymbol{r})}^2 \mathcal{L}(\boldsymbol{\bar{z}}, {\xi})\boldsymbol{\zeta}>0,
    \end{equation}
\rev{where $h(\boldsymbol{\bar{u}},\boldsymbol{\bar{r}},\xi) := \begin{bmatrix}
    \boldsymbol{\bar{u}} -\boldsymbol{\bar{r}}\\
    \xi - \xi_0
\end{bmatrix}$, }then $S$ is strongly regular at any point $\bar{\boldsymbol{z}} \in S(\xi)$. Thus, showing that \rev{    
    the strong second order sufficient condition (SOSC)} \eqref{eq:sosc} is always satisfied completes the first part of the proof. The Hessian of the Lagrangian with respect to $(\boldsymbol{u}, \boldsymbol{r})$ is given by\rev{
$
    \nabla_{(\boldsymbol{u}, \boldsymbol{r})}^2 \mathcal{L}(\boldsymbol{\bar{z}}, {\xi}) = \begin{bmatrix}
        M & 0\\
        0 & 0
    \end{bmatrix}.
$}
Consider \rev{some} ${\boldsymbol{\zeta}}$ satisfying the equality in \eqref{eq:sosc}, whose  first $s$ components are $0$, then because of the coupling of the cosntraint $h$, the whole vector $\boldsymbol{\zeta}$ is $0$. Thus any $\boldsymbol{\zeta}\neq 0$ satisfying the equality must have a non-zero value in first $s$ components, thus satisfying the strong SOSC equality since $M\succ 0$. Then, the Lipschitz continuity of the finitely many solutions follows from strong regularity as proven in \cite[Theorem~3.2]{dontchev2013euler}.
\end{proof}
    
The following Corollary shows that the optimal solution mapping $\boldsymbol{\varphi}^\star$ is Lipschitz.
\begin{corollary}\label{corollary:lipschitz}
    Let Assumptions \ref{assum:riccati}-\ref{assum:closed-mpc} hold, then the solution mapping $\boldsymbol{\varphi^\star}$ \rev{defined in \eqref{eq:phi_star}} satisfies
    \begin{equation}
        \|\boldsymbol{\varphi^\star}(\xi)- \boldsymbol{\varphi^\star}(\bar{\xi})\|\leq L_1\|\xi-\bar{\xi}\|,
    \end{equation}
    \rev{for all $\xi, \bar{\xi} \in \calX$ such that \eqref{prob:condensed-lqr} is feasible, }and for some $L_1>0$.
\end{corollary}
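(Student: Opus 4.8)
The plan is to recognize $\boldsymbol{\varphi^\star}(\xi)=[\boldsymbol{r}^{\star\top}(\xi),\boldsymbol{y}^{\star\top}(\xi)]^\top$ as a fixed linear selection of the full primal–dual KKT solution $\boldsymbol{z}^\star(\xi)=[\boldsymbol{u}^{\star\top},\boldsymbol{r}^{\star\top},\boldsymbol{y}^{\star\top},\boldsymbol{\lambda}^{\star\top}]^\top\in S(\xi)$, and to inherit Lipschitz continuity directly from Theorem~\ref{thm:strong-regularity}. Since selection operators are nonexpansive, the Lipschitz constant of $\boldsymbol{\varphi^\star}$ is controlled by that of $\boldsymbol{z}^\star$, and no new regularity argument is needed.

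Concretely, I would first justify that the ADMM limit defining $\boldsymbol{\varphi^\star}$ in \eqref{eq:phi_star} coincides with the $\boldsymbol{r}$- and $\boldsymbol{y}$-blocks of a solution $\boldsymbol{z}^\star(\xi)$ of the KKT system \eqref{eq:optimal_S}. This is exactly the stated fact that, under Assumptions~\ref{assum:riccati}–\ref{assum:closed-mpc}, ADMM converges to an optimal primal–dual pair of \eqref{prob:condensed-lqr} (recalling that $\boldsymbol{y}=\rho\boldsymbol{v}$ is the unscaled dual). Because $J(\boldsymbol{u})=\|\boldsymbol{u}\|_M^2$ is strictly convex ($M\succ0$) and $\boldsymbol{u}=\boldsymbol{r}$ on the feasible set, the optimal $\boldsymbol{r}^\star$ is unique, and the strong regularity established in Theorem~\ref{thm:strong-regularity} makes the whole map $\boldsymbol{z}^\star$ locally single-valued, so $\boldsymbol{\varphi^\star}$ is a genuine function of $\xi$. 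I would then write $\boldsymbol{\varphi^\star}(\xi)=\Pi\,\boldsymbol{z}^\star(\xi)$ with the constant selection matrix
\[
\Pi=\begin{bmatrix} 0 & I_s & 0 & 0\\ 0 & 0 & I_s & 0\end{bmatrix},
\]
for which $\|\Pi\|_2=1$. Invoking Theorem~\ref{thm:strong-regularity} to get a Lipschitz constant $L_z$ for $\boldsymbol{z}^\star$ on the feasible parameters, the chain
\[
\|\boldsymbol{\varphi^\star}(\xi)-\boldsymbol{\varphi^\star}(\bar\xi)\|=\|\Pi(\boldsymbol{z}^\star(\xi)-\boldsymbol{z}^\star(\bar\xi))\|\leq\|\boldsymbol{z}^\star(\xi)-\boldsymbol{z}^\star(\bar\xi)\|\leq L_z\|\xi-\bar\xi\|
\]
yields the claim with $L_1=L_z$.

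The main obstacle is ensuring the hypotheses of Theorem~\ref{thm:strong-regularity} hold \emph{uniformly} so that a single global constant $L_1$ is valid for all feasible $\xi,\bar\xi\in\calX$, since strong regularity only delivers Lipschitz continuity of the truncated inverse on a neighborhood of each solution. To upgrade this to a uniform estimate I would use that the feasible parameter set $\calX_f:=\{\xi\in\calX : \eqref{prob:condensed-lqr}\ \text{is feasible}\}$ is a compact convex polytope — Assumption~\ref{assum:closed-mpc} makes $\calX$ a bounded polytope, and $\calX_f$ is the projection onto the $\xi$-coordinates of the polytopic feasible set of \eqref{prob:condensed-lqr}. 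On such a domain, which is geodesically connected with finite diameter, a locally Lipschitz single-valued map is globally Lipschitz: one covers $\calX_f$ by finitely many neighborhoods carrying the local constants and chains the estimate along the segment joining $\xi$ and $\bar\xi$, taking $L_z$ to be the maximum of the local constants. A secondary technical point is confirming that the relevant solutions $\boldsymbol{z}^\star(\xi)$ — in particular the dual blocks $\boldsymbol{y}^\star,\boldsymbol{\lambda}^\star$ — remain bounded uniformly over $\calX_f$, so that the ``bounded point'' qualifier of Theorem~\ref{thm:strong-regularity} applies throughout; this follows from compactness of $\calX_f$ together with boundedness of the optimal multiplier set of a parametric convex quadratic program under the constraint qualification implied by strong regularity.
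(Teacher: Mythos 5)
Your proposal is correct and takes essentially the same route as the paper, whose own proof is the single line that the corollary ``follows directly from Theorem~\ref{thm:strong-regularity}.'' You simply make explicit the steps the paper leaves implicit --- viewing $\boldsymbol{\varphi^\star}$ as a norm-one selection of the KKT solution $\boldsymbol{z}^\star$, and upgrading the local Lipschitz constants from strong regularity to a uniform $L_1$ via compactness of the feasible parameter set --- which is a faithful elaboration rather than a different argument.
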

The proof for the corollary follows directly from Theorem \ref{thm:strong-regularity}.

We now extend results from \cite{limon2006stability, liao2021analysis, leung2021computable} to show that the optimal MPC closed-loop dynamics \eqref{eq:optimal_closed_loop} are exponentially stable within a forward invariant ROA that allows for active state and input constraints. The following is a preliminary result extended from \cite{liao2021analysis}.
\begin{lemma}\label{lemma:value-fn-bound}
    Let Assumptions~\ref{assum:riccati}-\ref{assum:closed-mpc} hold. Then for all \rev{$\xi \in \mathcal{X}$ such that \eqref{prob:condensed-lqr} is feasible}, 
    the value function satisfies 
    \begin{equation*}
        \Vert \xi \Vert_{P}^2 \leq V_N^\star(\xi) \leq \delta^2 \| \xi \|^2,
    \end{equation*}
    where $\delta^2 = \lambda^+(W) + L_1 \lambda^+(H) + 2L_1 \| G \|$.
\end{lemma}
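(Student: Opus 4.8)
The plan is to prove the two inequalities separately: the lower bound follows from a standard dynamic-programming argument built on the Riccati equation~\eqref{eq:dare}, while the upper bound exploits the compact quadratic representation $J(\boldsymbol u)=\|\boldsymbol u\|_M^2$ together with the Lipschitz estimate of Corollary~\ref{corollary:lipschitz}.

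For the lower bound, I would use that $P$ solving~\eqref{eq:dare} is equivalent to the one-step optimality relation $\|\zeta\|_Q^2+\|\nu\|_R^2+\|A\zeta+B\nu\|_P^2\ge\|\zeta\|_P^2$, holding for every state $\zeta$ and input $\nu$, with equality at the LQR gain $\nu=-K\zeta$. Applying this inequality at each stage of an arbitrary feasible trajectory $(\xi_i,\nu_i)_{i=0}^{N-1}$ of~\eqref{prob:lqr} and telescoping from $i=0$ to $N-1$ cancels the intermediate terms $\|\xi_i\|_P^2$, leaving the stage-plus-terminal cost bounded below by $\|\xi_0\|_P^2=\|\xi\|_P^2$. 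Since this holds for every feasible trajectory, it holds for the optimal one, giving $V_N^\star(\xi)\ge\|\xi\|_P^2$. This step is routine and presents no real obstacle.

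For the upper bound, I would write $V_N^\star(\xi)=\|\boldsymbol u^\star(\xi)\|_M^2$, where the consistency constraint of~\eqref{prob:condensed-lqr} forces $\boldsymbol u^\star(\xi)=\boldsymbol r^\star(\xi)$ and, by construction of $\boldsymbol u$, its first $n$ entries are pinned to $\xi$, so $\boldsymbol u^\star(\xi)=[\xi^\top,\boldsymbol\nu^{\star\top}]^\top$ with $\boldsymbol\nu^\star:=[\nu_0^{\star\top},\dots,\nu_{N-1}^{\star\top}]^\top$. Expanding the block quadratic form with $M=\begin{bmatrix}W & G^\top\\ G & H\end{bmatrix}$ then gives
\begin{equation*}
V_N^\star(\xi)=\|\xi\|_W^2+2\,\boldsymbol\nu^{\star\top}G\,\xi+\|\boldsymbol\nu^\star\|_H^2,
\end{equation*}
which I would bound termwise using $\|\xi\|_W^2\le\lambda^+(W)\|\xi\|^2$, $\|\boldsymbol\nu^\star\|_H^2\le\lambda^+(H)\|\boldsymbol\nu^\star\|^2$, and $2\boldsymbol\nu^{\star\top}G\xi\le2\|G\|\,\|\boldsymbol\nu^\star\|\,\|\xi\|$ via Cauchy--Schwarz and the operator-norm bound.

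The crux is to control $\|\boldsymbol\nu^\star\|$ by $\|\xi\|$, and this is where I expect the only real work. Because the origin is feasible (Assumption~\ref{assum:closed-mpc}) and $Q,R\succ0$, the optimal solution from $\bar\xi=0$ is the zero trajectory, hence $\boldsymbol r^\star(0)=\boldsymbol u^\star(0)=0$. Applying Corollary~\ref{corollary:lipschitz} with $\bar\xi=0$ then yields $\|\boldsymbol r^\star(\xi)\|\le\|\boldsymbol\varphi^\star(\xi)-\boldsymbol\varphi^\star(0)\|\le L_1\|\xi\|$, and since $\boldsymbol\nu^\star$ is a subvector of $\boldsymbol u^\star=\boldsymbol r^\star$ we obtain $\|\boldsymbol\nu^\star\|\le L_1\|\xi\|$. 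Substituting this into the three termwise bounds and collecting the common factor $\|\xi\|^2$ produces the claimed constant $\delta^2$. The main obstacle is thus the transfer of the Lipschitz constant from the full primal--dual map $\boldsymbol\varphi^\star$ to the input subvector, which hinges on verifying $\boldsymbol\varphi^\star(0)=0$ so that the Lipschitz inequality collapses to the homogeneous estimate $\|\boldsymbol\nu^\star\|\le L_1\|\xi\|$.
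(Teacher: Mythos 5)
Your proposal is correct and follows essentially the same route as the paper: the upper bound via the block expansion of $\|(\xi,\mu^\star(\xi))\|_M^2$, termwise eigenvalue and Cauchy--Schwarz bounds, and the homogeneous Lipschitz estimate $\|\mu^\star(\xi)\|\le L_1\|\xi\|$ from Corollary~\ref{corollary:lipschitz} with $\bar\xi=0$; and the lower bound from the choice of terminal cost $P$ solving the DARE (your telescoping argument is just the spelled-out version of what the paper asserts in one line). The only caveat, which affects the paper's statement as much as your write-up, is that the termwise substitution actually yields $L_1^2\lambda^+(H)\|\xi\|^2$ for the $H$-term, so the constant should read $\delta^2=\lambda^+(W)+L_1^2\lambda^+(H)+2L_1\|G\|$ rather than the $L_1\lambda^+(H)$ printed in the lemma.
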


\begin{proof}
   From the definition of $V_N^\star(\xi)$
    \begin{equation*}
        V_N^\star(\xi) = \| (\xi, \mu^\star(\xi)) \|_M^2 = \| \xi \|_W^2 + 2 \langle \mu^\star(\xi), G\xi\rangle  + \| \mu^\star(\xi) \|_H^2.
    \end{equation*}
    Using the inequalities
    \begin{equation*}
        \| \xi \|_W^2 \leq \lambda^+(W) \| \xi \|^2 \text{ and } \| \mu^\star(\xi) \|_H^2 \leq \lambda^+(H) \| \mu^\star(\xi) \|^2,
    \end{equation*}
    and setting $\bar{\xi} = 0$ in Corollary~\ref{corollary:lipschitz}, it follows from the sub-multiplicativity property of norms that $\|\mu^\star(\xi) \| \leq L_1 \| \xi \|$. Using the Cauchy-Schwarz inequality, it also holds that
    \begin{equation*}
        \langle \mu^\star(\xi), G\xi\rangle  \leq \|\mu^\star(\xi) \| \| G \xi \| \leq L_1 \|G\| \| \xi \|_2^2,
    \end{equation*}
    completing the proof for the upper bound. The lower bound follows from setting the terminal cost matrix in problem \eqref{prob:condensed-lqr} to be the DARE solution $P$.
\end{proof}
To show exponential stability of the optimal closed-loop system~\eqref{eq:optimal_closed_loop}, we define $\psi(\xi):= \sqrt{V_N^\star(\xi)}$ for a given $\xi\in \mathbb{R}^n$, and use this as a Lyapunov function. Consider the set $ \Gamma = \{\xi \in \mathbb{R}^n \ |\ \| \xi \|_P^2 \leq c \}$, where $c > 0$ is small enough such that $\Gamma \subset \{\xi \in \mathcal{X}\ |\ -K\xi \in \calU\}$, \rev{, and $K= -\left(R+B^\top P B\right)^{-1}B^\top P A$}. As shown in \cite[Theorem 1]{leung2021computable}, with the choice of  $d = c\lambda^{-}(Q)/\lambda^{+}(P)$, the assumptions in \cite{limon2006stability} are satisfied such that the set
    \begin{equation}\label{eq:terminal-region}
    \Gamma_N = \{\xi \in \calX \;|\; V_N^\star(\xi) \leq Nd+c\} 
    \end{equation}
is forward invariant for the policy \eqref{prob:lqr}, and the closed-loop system \eqref{eq:optimal_closed_loop} is asymptotically stable. The following result shows that in this setting the dynamics are also exponentially stable.
\begin{lemma}\rev{\cite[Thm. 1]{leung2021computable}}\label{lemma:exp-stability}
    Let Assumptions~\ref{assum:riccati}-\ref{assum:closed-mpc} hold, and $f$ be defined as in~\eqref{eq:optimal_closed_loop}.  Then for all $\xi \in \Gamma_N$, we have that $\psi(f(\xi)) \leq \beta \psi(\xi)$, where $\beta^2 = 1 - \lambda^{-}(Q)/\delta^2 \in (0, 1)$. 
\end{lemma}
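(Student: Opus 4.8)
The plan is to use $\psi=\sqrt{V_N^\star}$ as a Lyapunov function and to reduce the claimed contraction to a one-step decrease of the value function itself, namely to show
\[
V_N^\star(f(\xi)) \le \beta^2\, V_N^\star(\xi), \qquad \xi \in \Gamma_N.
\]
Since $\psi^2 = V_N^\star \ge 0$ and $\beta = \sqrt{\beta^2} \ge 0$, taking square roots immediately yields $\psi(f(\xi)) \le \beta\,\psi(\xi)$, so the square root is only a cosmetic final step and the whole argument can be carried out at the level of $V_N^\star$.

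The core of the proof is the standard MPC value-function decrease
\[
V_N^\star(f(\xi)) \le V_N^\star(\xi) - \|\xi\|_Q^2 .
\]
To establish it, I would let $(\nu_0^\star,\dots,\nu_{N-1}^\star)$ be the optimal input sequence for $\xi$, with predicted states $(\xi_0,\dots,\xi_N)$ and $\xi_0=\xi$, so that $f(\xi)=\xi_1$. I then construct a feasible (suboptimal) candidate for the state $\xi_1$ by shifting this sequence and appending the terminal LQR feedback, i.e. $(\nu_1^\star,\dots,\nu_{N-1}^\star,-K\xi_N)$ with $K$ as in \eqref{eq:dare}. Because $\xi\in\Gamma_N$ and $\Gamma_N$ in \eqref{eq:terminal-region} is forward invariant with terminal region $\Gamma$, the state $\xi_N$ lies in $\Gamma$, on which $-K\xi_N\in\calU$ and the successor $(A-BK)\xi_N$ stays in $\calX$; hence the candidate is feasible. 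Evaluating its cost and using that $P$ solves the DARE \eqref{eq:dare}, which gives the identity $\|\xi_N\|_P^2 = \|\xi_N\|_Q^2 + \|K\xi_N\|_R^2 + \|(A-BK)\xi_N\|_P^2$, the terminal-cost term telescopes exactly against the appended stage cost. The candidate cost therefore equals $V_N^\star(\xi) - \|\xi\|_Q^2 - \|\nu_0^\star\|_R^2$, and optimality of $V_N^\star(\xi_1)$ together with $R\succ 0$ yields the displayed decrease.

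I expect this decrease step to be the main obstacle, as it is the only place where feasibility and the terminal structure interact: the shifted candidate is admissible precisely because of the forward invariance of $\Gamma_N$ established via \cite{limon2006stability, leung2021computable}, and the exact cancellation of the terminal terms hinges on the DARE \eqref{eq:dare}. Once the decrease is in hand, I finish by invoking the upper bound of Lemma~\ref{lemma:value-fn-bound}, $V_N^\star(\xi) \le \delta^2\|\xi\|^2$, which rearranges to $\|\xi\|^2 \ge V_N^\star(\xi)/\delta^2$. Combining with $\|\xi\|_Q^2 \ge \lambda^-(Q)\|\xi\|^2$ gives
\[
V_N^\star(f(\xi)) \le V_N^\star(\xi) - \lambda^-(Q)\|\xi\|^2 \le \Big(1 - \tfrac{\lambda^-(Q)}{\delta^2}\Big) V_N^\star(\xi) = \beta^2\, V_N^\star(\xi).
\]
Finally, $\beta^2\in(0,1)$ follows because $\lambda^-(Q)>0$ by Assumption~\ref{assum:riccati}, while the chain $\lambda^-(Q)\|\xi\|^2 \le \|\xi\|_Q^2 \le V_N^\star(\xi) \le \delta^2\|\xi\|^2$ forces $\lambda^-(Q)\le\delta^2$; taking square roots then completes the proof.
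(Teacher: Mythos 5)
Your proposal is correct and follows essentially the same route as the paper: establish the standard one-step decrease $V_N^\star(f(\xi)) \le V_N^\star(\xi) - \|\xi\|_Q^2$, convert $-\|\xi\|_Q^2 \le -\lambda^-(Q)\|\xi\|^2 \le -(\lambda^-(Q)/\delta^2)V_N^\star(\xi)$ via the upper bound of Lemma~\ref{lemma:value-fn-bound}, and take square roots. The only difference is that you spell out the shifted-sequence/DARE argument for the decrease, which the paper simply inherits from \cite{limon2006stability,leung2021computable}, and you justify $\beta^2\ge 0$ via $\|\xi\|_Q^2\le V_N^\star(\xi)\le\delta^2\|\xi\|^2$ rather than via $W\succeq Q$; both are valid.
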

\begin{proof}
    For any $\xi\in \Gamma_N$, it holds that
    \begin{equation*}
        V_N^\star(f(\xi)) - V_N^\star(\xi) \leq -\| \xi \|_Q^2 \leq \lambda^{-}(Q) \| \xi \|_2^2.
    \end{equation*}
    Using Lemma~\ref{lemma:value-fn-bound}
    \begin{equation*}
        V_N^\star(f(\xi)) - V_N^\star(\xi) \leq -\frac{\lambda^{-}(Q)}{\delta^2} V_N^\star(\xi) \implies V_N^\star(f(\xi)) \leq \beta^2 V(\xi),
    \end{equation*}
    where $\beta^2 = 1 - \frac{\lambda^{-}(Q)}{\delta^2}$. Since $W\succeq Q$\cite{liao2021analysis}, it holds that $\lambda^{+}(W) \geq \lambda^{-}(Q)$, and hence $\beta^2 \in (0, 1)$.
\end{proof}

\ak{Next, we show the Lipschitz-continuity of $\psi$.}
\begin{lemma}\label{lemma:sqrt-value-fn-bound}
    Let Assumptions~\ref{assum:riccati}-\ref{assum:closed-mpc} hold, then
    \begin{equation*}
        |\psi(\xi) - \psi(\bar{\xi}) | \leq \delta \Vert \xi - \bar{\xi} \Vert\qquad \forall \xi, \bar{\xi} \in \Gamma_N.
    \end{equation*}
\end{lemma}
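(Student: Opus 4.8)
The plan is to exploit the representation of the value function established in the proof of Lemma~\ref{lemma:value-fn-bound}, namely $V_N^\star(\xi) = \Vert \boldsymbol{u}^\star(\xi) \Vert_M^2$ with the augmented optimizer $\boldsymbol{u}^\star(\xi) := [\xi^\top, \mu^\star(\xi)^\top]^\top$. Since $M \succ 0$, the map $x \mapsto \Vert x \Vert_M$ is a genuine norm, so $\psi(\xi) = \Vert \boldsymbol{u}^\star(\xi) \Vert_M$. The whole point of this representation is that it lets me sidestep the algebraically tempting but degenerate factorization $\psi^2(\xi) - \psi^2(\bar{\xi}) = (\psi(\xi) - \psi(\bar{\xi}))(\psi(\xi) + \psi(\bar{\xi}))$, whose division by $\psi(\xi) + \psi(\bar{\xi})$ breaks down near the origin where $\psi$ vanishes. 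Instead, the reverse triangle inequality for $\Vert \cdot \Vert_M$ gives directly
\[
  |\psi(\xi) - \psi(\bar{\xi})| = \bigl| \Vert \boldsymbol{u}^\star(\xi) \Vert_M - \Vert \boldsymbol{u}^\star(\bar{\xi}) \Vert_M \bigr| \leq \Vert \boldsymbol{u}^\star(\xi) - \boldsymbol{u}^\star(\bar{\xi}) \Vert_M .
\]

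It then remains to prove $\Vert \boldsymbol{u}^\star(\xi) - \boldsymbol{u}^\star(\bar{\xi}) \Vert_M \leq \delta \Vert \xi - \bar{\xi} \Vert$, which I would obtain by replaying the computation in the proof of Lemma~\ref{lemma:value-fn-bound} \emph{verbatim}, but applied to the difference vector rather than to $\boldsymbol{u}^\star(\xi)$ itself. Writing $\Delta\xi := \xi - \bar{\xi}$ and $\Delta\mu := \mu^\star(\xi) - \mu^\star(\bar{\xi})$ and expanding the $M$-norm into its $W$, $G$, and $H$ blocks yields
\[
  \Vert \boldsymbol{u}^\star(\xi) - \boldsymbol{u}^\star(\bar{\xi}) \Vert_M^2 = \Vert \Delta\xi \Vert_W^2 + 2 \langle \Delta\mu, G \Delta\xi \rangle + \Vert \Delta\mu \Vert_H^2 .
\]
I would bound the three terms exactly as before: $\Vert \Delta\xi \Vert_W^2 \leq \lambda^+(W) \Vert \Delta\xi \Vert^2$, the cross term via Cauchy–Schwarz and sub-multiplicativity, and $\Vert \Delta\mu \Vert_H^2 \leq \lambda^+(H) \Vert \Delta\mu \Vert^2$. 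The essential input is Lipschitz continuity of $\mu^\star$, which follows from Corollary~\ref{corollary:lipschitz} because $\mu^\star(\xi)$ is a sub-vector of $\boldsymbol{\varphi}^\star(\xi)$, giving $\Vert \Delta\mu \Vert \leq L_1 \Vert \Delta\xi \Vert$. Collecting the terms collapses the right-hand side into $\delta^2 \Vert \Delta\xi \Vert^2$, and taking square roots finishes the argument.

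I expect this proof to be largely mechanical once the norm representation is in hand, so the "hard part" is really conceptual rather than computational: recognizing that $\psi$ is a Lipschitz map post-composed with a norm, and choosing the reverse triangle inequality so that the origin causes no trouble. The one place to be careful is the bookkeeping of the constant. A good consistency check is that setting $\bar{\xi} = 0$ recovers precisely the upper bound $\psi(\xi) \leq \delta \Vert \xi \Vert$ of Lemma~\ref{lemma:value-fn-bound}; while performing the final collection I would make sure the contribution of the $H$-block is matched to the definition $\delta^2 = \lambda^+(W) + L_1 \lambda^+(H) + 2 L_1 \Vert G \Vert$ used there, so that the stated constant $\delta$ indeed emerges.
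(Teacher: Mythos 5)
Your proof is correct and follows essentially the same route as the paper: apply the reverse triangle inequality for the $\Vert\cdot\Vert_M$ norm to reduce the claim to bounding $\Vert(\xi-\bar\xi,\,\mu^\star(\xi)-\mu^\star(\bar\xi))\Vert_M$, then reuse the block expansion and the Lipschitz bound from Corollary~\ref{corollary:lipschitz} exactly as in the proof of Lemma~\ref{lemma:value-fn-bound} to obtain the constant $\delta$. The consistency check at $\bar\xi=0$ is a nice touch but not needed.
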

\begin{proof}
    Using the definition of $\psi$ and applying reverse triangle inequality, we have
    \begin{equation*}
        | \psi(\xi) - \psi(\bar{\xi}) |^2 = \left | \|(\xi, \mu^\star(\xi) \|_M - \| (\bar{\xi}, \mu^\star(\bar{\xi})) \|_M \right |^2
        \leq \| (\mu^\star(\xi) - \mu^\star(\bar{\xi}), \xi - \bar{\xi}) \|_M^2.
    \end{equation*}
\ak{As in the proof for Lemma~\ref{lemma:value-fn-bound}}
    \begin{align*}
        \| (\mu^\star(\xi) - \mu^\star(\bar{\xi}), \xi - \bar{\xi}) \|_M^2 &= \| \xi - \bar{\xi} \|_W^2 + 2\langle \mu^\star(\xi) - \mu^\star(\bar{\xi}), G(\xi - \bar{\xi})\rangle  + \| \mu^\star(\xi) - \mu^\star(\bar{\xi}) \|_H^2 \\
        &\leq \delta^2 \| \xi - \bar{\xi} \|^2.
    \end{align*}
    Combining the two inequalities, the result follows.
\end{proof}
\rev{The proof is similar to that of \cite[Lemma 3]{liao2020time}, and follows readily from the application of reverse triangle inequality to the square of the left hand side of the inequality.}

\section{Closed-loop Stability Analysis} \label{sec:closed-loop-analysis}

In this section we study the closed-loop properties of the system-optimizer closed-loop dynamics \eqref{eq:suboptimal_closed-loop2}, equivalently rewritten as
\begin{subequations}\label{eq:suboptimal-one-step-dyn}
\begin{align}
    x_{t+1} &=f(x_t) + \bar{B}\left({\boldsymbol{\varphi}}_t^\ell - {\boldsymbol{\varphi}}^\star(x_t)\right),\\
    \boldsymbol{\varphi}^\ell_{t+1} &= \mathcal{T}^\ell(\boldsymbol{\varphi}^\ell_{t}; x_t).
\end{align}
\end{subequations}
\rev{where $f$ is defined in \eqref{eq:optimal_closed_loop}}. 
We define $\boldsymbol{e}^{\ell}_t:={\boldsymbol{\varphi}}_t^\ell - {\boldsymbol{\varphi}}^\star(x_t)$ as a disturbance on the policy due to the suboptimality of $\ell$-step ADMM and $\bar{B}$ is defined as in \eqref{eq:suboptimal_closed-loop2}. We show the asymptotic stability of the suboptimal dynamics \eqref{eq:suboptimal-one-step-dyn}, building the proof on the results from time-distributed optimization \cite{liao2020time}, linear convergence of ADMM \cite{nishihara2015general}, and the small-gain theorem of discrete-time interconnected nonlinear systems \cite{jiang2004nonlinear}.

The following result shows the input-to-state-stability (ISS) of~\eqref{eq:optimal_closed_loop}, extended from a similar result in \cite[Theorem 3]{liao2021analysis}.

\begin{theorem}\label{thm:optimal-ISS}
    Let  Assumptions~\ref{assum:riccati} and~\ref{assum:closed-mpc} hold, then, for any $x_0\in \Gamma_N$ and inputs $\{\boldsymbol{e}^\ell_i\}_{i=0}^t \in \mathcal{E}$, the dynamics~\eqref{eq:suboptimal-one-step-dyn} satisfy
    \begin{equation*}
        \Vert x_t \Vert_P \leq \beta^t \delta \Vert x_0 \Vert + \gamma_1 \sup\limits_{t \geq 0} \Vert \bar{B}  \boldsymbol{e}^{\ell}_t\Vert,
    \end{equation*}
    where $\gamma_1 = \delta (1-\beta)^{-1}$ and $\calE = \{\boldsymbol{e}\in \mathbb{R}^{2s}\ |\ \Vert \boldsymbol{e} \Vert \leq \frac{(1-\beta)r_N}{\delta\|\bar{B}\|}\}$ and $r_N=\sqrt{Nd+c}$.
\end{theorem}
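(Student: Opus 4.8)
The plan is to use $\psi(\xi)=\sqrt{V_N^\star(\xi)}$ as an ISS--Lyapunov function for the perturbed closed loop $x_{t+1}=f(x_t)+\bar{B}\boldsymbol{e}_t^\ell$, treating $\bar{B}\boldsymbol{e}_t^\ell$ as an exogenous disturbance acting on the optimal dynamics $f$. The core of the argument is the single-step estimate
\[
\psi(x_{t+1})\;\le\;\beta\,\psi(x_t)+\delta\,\|\bar{B}\boldsymbol{e}_t^\ell\|,
\]
which I would obtain by first applying the Lipschitz bound of Lemma~\ref{lemma:sqrt-value-fn-bound} to peel off the perturbation, $\psi(x_{t+1})=\psi\!\left(f(x_t)+\bar{B}\boldsymbol{e}_t^\ell\right)\le\psi(f(x_t))+\delta\|\bar{B}\boldsymbol{e}_t^\ell\|$, and then contracting the nominal term via the exponential-decrease inequality $\psi(f(x_t))\le\beta\,\psi(x_t)$ of Lemma~\ref{lemma:exp-stability}.

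Before iterating this bound I would establish forward invariance of $\Gamma_N$ under the perturbed dynamics by induction, since both Lemma~\ref{lemma:exp-stability} and Lemma~\ref{lemma:sqrt-value-fn-bound} are stated on $\Gamma_N$. Assuming $x_t\in\Gamma_N$, forward invariance of $\Gamma_N$ under the optimal policy gives $f(x_t)\in\Gamma_N$, hence $\psi(f(x_t))\le\beta\,\psi(x_t)\le\beta r_N$ with $r_N=\sqrt{Nd+c}$. Since $\boldsymbol{e}_t^\ell\in\mathcal{E}$ enforces $\|\boldsymbol{e}_t^\ell\|\le(1-\beta)r_N/(\delta\|\bar{B}\|)$, the single-step estimate yields
\[
\psi(x_{t+1})\;\le\;\beta r_N+\delta\|\bar{B}\|\cdot\frac{(1-\beta)r_N}{\delta\|\bar{B}\|}\;=\;r_N,
\]
so that $x_{t+1}\in\Gamma_N$ and the induction closes. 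I expect the delicate point to be the apparent circularity here: applying Lemma~\ref{lemma:sqrt-value-fn-bound} nominally requires both $x_{t+1}$ and $f(x_t)$ to lie in $\Gamma_N$, yet membership of $x_{t+1}$ is exactly what we are trying to conclude. I would resolve this by observing that the Lipschitz estimate ultimately rests on the global Lipschitz continuity of $\mu^\star$ from Corollary~\ref{corollary:lipschitz}, which holds for \emph{all} feasible $\xi\in\calX$; thus it applies as soon as $x_{t+1}$ is feasible, which holds by construction because the applied input is the first component of the constraint-satisfying iterate $\boldsymbol{r}_t^\ell$. The explicit radius of $\mathcal{E}$ is precisely calibrated so that a feasible perturbation can never push the state out of $\Gamma_N$ in a single step, and this invariance is the main obstacle in the proof.

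With invariance secured, the single-step estimate holds for every $t$, and unrolling it gives
\[
\psi(x_t)\;\le\;\beta^t\psi(x_0)+\delta\sum_{i=0}^{t-1}\beta^{\,t-1-i}\|\bar{B}\boldsymbol{e}_i^\ell\|\;\le\;\beta^t\psi(x_0)+\frac{\delta}{1-\beta}\sup_{t\ge 0}\|\bar{B}\boldsymbol{e}_t^\ell\|,
\]
where the geometric sum is bounded by $(1-\beta)^{-1}$ because $\beta\in(0,1)$. Finally I would convert to the $P$-norm using the two-sided bound of Lemma~\ref{lemma:value-fn-bound}: its lower bound gives $\|x_t\|_P\le\psi(x_t)$ and its upper bound gives $\psi(x_0)\le\delta\|x_0\|$. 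Substituting both and identifying $\gamma_1=\delta(1-\beta)^{-1}$ reproduces the claimed inequality. Beyond the invariance argument, the geometric-sum and norm-conversion manipulations are routine.
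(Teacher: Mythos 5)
Your proposal is correct and follows essentially the same route as the paper: the same one-step estimate $\psi(x_{t+1})\le\beta\psi(x_t)+\delta\|\bar{B}\boldsymbol{e}_t^\ell\|$ obtained by combining Lemma~\ref{lemma:sqrt-value-fn-bound} with Lemma~\ref{lemma:exp-stability}, the same forward-invariance argument from the radius of $\mathcal{E}$, and the same unrolling plus conversion via Lemma~\ref{lemma:value-fn-bound}. Your remark on the apparent circularity in invoking the Lipschitz bound before $x_{t+1}\in\Gamma_N$ is established is a point the paper glosses over, and your resolution via the feasibility of the applied input is sound.
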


\begin{proof}
    We first show that $\Gamma_N$ is forward-invariant under the suboptimal dynamics \eqref{eq:suboptimal-one-step-dyn}. Given the dynamics~\eqref{eq:optimal_closed_loop} and~\eqref{eq:suboptimal-one-step-dyn}, it follows from Lemma~\ref{lemma:sqrt-value-fn-bound} that
    \begin{align*}
        |\psi(x^\star_{t+1}) - \psi(x_{t+1}) | &\leq \delta\Vert x^\star_{t+1} - x_{t+1}\Vert \leq \delta \| \bar{B} \boldsymbol{e}^{\ell}_t \|.
    \end{align*} 
    Let $x_t \in \Gamma_N$\rev{, then from \eqref{eq:terminal-region} it holds that $\psi(x_t) \leq r_N:=\sqrt{Nd+c}$}. Using Lemma~\ref{lemma:exp-stability}, we have
    \begin{equation*}
    \psi(x_{t+1}) \leq \psi(x^\star_{t+1}) + |\psi(x_{t+1}) - \psi(x^\star_{t+1})| \nonumber \leq \beta \psi(x_t) + \delta \Vert \bar{B} \Vert \Vert \boldsymbol{e}^{\ell}_t \Vert. 
    \end{equation*}
    Given the restriction on the signal $\boldsymbol{e}^{\ell}_t \in \calE$, it holds that $\delta \| \bar{B}\|\| \boldsymbol{e}^{\ell}_t \| \leq (1-\beta) r_N$, implying that $\psi(x_{t+1}) \leq r_N$. 
    Applying the above inequality recursively, and using the upper bound in Lemma \ref{lemma:value-fn-bound}
    \begin{equation*}
    \psi(x_t) \leq \beta^t \psi(x_0) + \delta\sum_{j=0}^{t-1} \beta^{t-1-j} \Vert\bar{B} e^{\ell}_j \Vert \leq \beta^t \delta\|x_0\| + \gamma_1\sup_{t \geq 0} \Vert \bar{B} \boldsymbol{e}^{\ell}_t \Vert.
\end{equation*}
The result follows from the lower bound in Lemma \ref{lemma:value-fn-bound}.
\end{proof}

The following theorem shows an ISS-like result for the error dynamics $\boldsymbol{e}^{\ell}_t$\rev{, akin to the time-distributed optimization results in \cite{liao2020time, liao2021analysis}}.

\begin{theorem}\label{thm:suboptimal-ISS}
    Let Assumptions~\ref{assum:riccati}-\ref{assum:closed-mpc} hold, \rev{and $x_0 \in \Gamma_N$,} then the error signal $\boldsymbol{e}^{\ell}_t$ satisfies
    \begin{equation*}
        \Vert \boldsymbol{e}^{\ell}_t \Vert_F\leq \tau^{\ell t}\| 
        \boldsymbol{e}^{\ell}_0 \|_{{F}} + \gamma_2(\ell)\sup\limits_{t \geq 0} \| \Delta x_t \|_{{F}},
    \end{equation*}
\end{theorem}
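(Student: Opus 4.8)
The plan is to convert the linear convergence of ADMM in Proposition~\ref{prop:admm-convergence-thm6} into a scalar recursion for $\|\boldsymbol{e}^\ell_t\|_F$ forced by the state increments, and then solve it. The key structural fact is that the warmstart makes the $\ell$-step ADMM pass that produces $\boldsymbol{\varphi}_t^\ell$ start from the previous terminal iterate $\boldsymbol{\varphi}_{t-1}^\ell$, so Proposition~\ref{prop:admm-convergence-thm6} applies verbatim and yields
$$\|\boldsymbol{e}^\ell_t\|_F = \|\boldsymbol{\varphi}_t^\ell - \boldsymbol{\varphi}^\star(x_t)\|_F \leq \tau^\ell \|\boldsymbol{\varphi}_{t-1}^\ell - \boldsymbol{\varphi}^\star(x_t)\|_F.$$
Since the target here is the optimizer of the \emph{current} state $x_t$ while the previous error $\boldsymbol{e}^\ell_{t-1}$ is measured against $\boldsymbol{\varphi}^\star(x_{t-1})$, I would insert and subtract $\boldsymbol{\varphi}^\star(x_{t-1})$ and apply the triangle inequality,
$$\|\boldsymbol{e}^\ell_t\|_F \leq \tau^\ell\left(\|\boldsymbol{e}^\ell_{t-1}\|_F + \|\boldsymbol{\varphi}^\star(x_{t-1}) - \boldsymbol{\varphi}^\star(x_t)\|_F\right).$$
The sensitivity term is exactly what Corollary~\ref{corollary:lipschitz} controls: it is bounded by $L_1\|x_t - x_{t-1}\|$ in the Euclidean norm. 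Because $F\succ 0$ for $\alpha\in(0,2)$ (its eigenvalues are $\alpha$ and $2-\alpha$), $\|\cdot\|_F$ is equivalent to $\|\cdot\|$, so this bound passes into the $F$-norm up to the factor $\sqrt{\lambda^+(F)/\lambda^-(F)}$, which I fold into a single constant $\tilde L$. Writing $\Delta x_t := x_t - x_{t-1}$, this gives the one-step recursion $\|\boldsymbol{e}^\ell_t\|_F \leq \tau^\ell\|\boldsymbol{e}^\ell_{t-1}\|_F + \tau^\ell\tilde L\,\|\Delta x_t\|_F$.

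It then remains to unroll this linear recursion. Iterating from $t$ down to the initialization gives
$$\|\boldsymbol{e}^\ell_t\|_F \leq \tau^{\ell t}\|\boldsymbol{e}^\ell_0\|_F + \tilde L\sum_{k=1}^{t}\tau^{\ell k}\|\Delta x_{t-k+1}\|_F.$$
Bounding every increment by $\sup_{t\geq 0}\|\Delta x_t\|_F$ and summing the geometric series $\sum_{k\geq 1}\tau^{\ell k} = \tau^\ell/(1-\tau^\ell)$, which converges because $\tau\in(0,1)$, produces the claimed estimate with gain $\gamma_2(\ell) = \tilde L\,\tau^\ell/(1-\tau^\ell)$. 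This gain is decreasing in $\ell$ and vanishes as $\ell\to\infty$, which is precisely the property needed to later close a small-gain argument against Theorem~\ref{thm:optimal-ISS}.

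The recursion itself is routine; the main obstacle is justifying its hypotheses uniformly in $t$. Both Proposition~\ref{prop:admm-convergence-thm6} and Corollary~\ref{corollary:lipschitz} require $x_{t-1}$ and $x_t$ to render~\eqref{prob:condensed-lqr} feasible, so that $\boldsymbol{\varphi}^\star$ is well-defined, single-valued and Lipschitz at both states and the contraction factor $\tau$ applies. This is secured by the forward-invariance of $\Gamma_N$ under the suboptimal dynamics~\eqref{eq:suboptimal-one-step-dyn} established in the proof of Theorem~\ref{thm:optimal-ISS}, valid under the restriction $\boldsymbol{e}^\ell_t\in\mathcal{E}$ and $x_0\in\Gamma_N$. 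One must also check that the rate $\tau$ and the Lipschitz constant $L_1$ are state-independent on $\Gamma_N$ — which holds since $\tau$ depends only on the fixed parameters $\alpha,\rho,M$ and $L_1$ is a uniform constant over the feasible set — so that a single $\gamma_2(\ell)$ works for all $t$.
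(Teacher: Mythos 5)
Your proposal is correct and follows essentially the same route as the paper's proof: apply Proposition~\ref{prop:admm-convergence-thm6} with the warm-started iterate, insert $\boldsymbol{\varphi}^\star(x_{t-1})$ via the triangle inequality, invoke Corollary~\ref{corollary:lipschitz}, unroll the resulting linear recursion, and sum the geometric series to obtain $\gamma_2(\ell)\propto \tau^\ell/(1-\tau^\ell)$. The only differences are cosmetic (your norm-equivalence constant $\sqrt{\lambda^+(F)/\lambda^-(F)}$ versus the paper's $\|F^{1/2}\|$ with $\|\Delta x_t\|$ kept Euclidean) plus your added -- and welcome -- discussion of why the hypotheses hold uniformly in $t$, which the paper leaves implicit.
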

where $\Delta x_t := x_{t} - x_{t-1}$ and $\gamma_2(\ell) := L_1\rev{\|F^{\frac{1}{2}}\|}\tau^{\ell}/(1-\tau^{\ell})$. 

\begin{proof}
    Using Proposition \ref{prop:admm-convergence-thm6}, it holds that
    \begin{equation*}
          \| \boldsymbol{e}^{\ell}_t \|_F
    \leq \tau^{\ell} \left \|{\boldsymbol{{\varphi}}}_{t-1}^\ell -{\boldsymbol{{\varphi}}}^\star (x_t)
    \right \|_{{F}} \leq \tau^\ell \|\boldsymbol{e}^{\ell}_{t-1}\|_F + \tau^\ell\|{\boldsymbol{{\varphi}}}^\star (x_t) - {\boldsymbol{{\varphi}}}^\star (x_{t-1})\|_F \leq \tau^\ell \|\boldsymbol{e}^{\ell}_{t-1}\|_F + L_1\rev{\|F^{\frac{1}{2}}\|\tau^\ell\|\Delta x_t\|},
    \end{equation*}
    where the second inequality follows from the triangle inequality, and the last from Corollary \ref{corollary:lipschitz}. Applying the above recursively and using the triangle ineqality
\begin{equation*}
    \Vert \boldsymbol{e}^{\ell}_t \Vert_F \leq \tau^{\ell t} \left \| \boldsymbol{e}^{\ell}_0 \right \|_{{F}} + L_1\rev{\|F^{\frac{1}{2}}\| \tau^{\ell} \sum_{j=1}^{t} \tau^{t-j}\Vert \Delta x_j \Vert}.
\end{equation*}
Defining $\gamma_2(\ell) = L_1\rev{\|F^{\frac{1}{2}}\|}\tau^{\ell}/(1-\tau^{\ell})$, the result follows. 
\end{proof}

The following theorem summarizes the main result of the paper; it provides \rev{an explicit} lower bound on the ADMM iterations $\ell$ such that the closed-loop system \eqref{eq:suboptimal-one-step-dyn} is asymptotically stable.
\begin{theorem}\label{thm:asympt-stable-suboptimal-cl}
    Let Assumptions~\ref{assum:riccati}-\ref{assum:closed-mpc}  hold , $\tau\in(0,1)$ be as in Proposition~\ref{prop:admm-convergence-thm6}, and $\ell\geq \ell^\star$, where  
    \begin{equation*}
        \ell^\star = \max\left\{-\frac{\log{(1+\gamma_3 \gamma_1 L_1})}{\log{\tau}}, -\frac{\rev{\log}\rev{\left(\omega_1 + \omega_2 \gamma_1\|\bar{B}\|\right)}}{\log{\tau}}\right\}.
    \end{equation*} Then the set
    \begin{equation*}
    \begin{split}
        \Omega = \{(x,\boldsymbol{\varphi}) \in \Gamma_N \times \mathbb{R}^{2s}\ | \ \psi(x)\leq r_N, 
        \|\boldsymbol{{\varphi}} - \boldsymbol{{\varphi}}^\star(x)\| \leq r_e\}
        \end{split}
    \end{equation*}
    is a forward invariant ROA for the closed-loop suboptimal dynamics \eqref{eq:suboptimal-one-step-dyn} with the definitions  $r_e = \rev{\gamma_1^{-1} \|\bar{B}\|^{-1} r_N} ,\gamma_3 = 2\|F^{-\frac{1}{2}}\|\rev{\Vert}P^{-\frac{1}{2}}\|$, $\omega_1 \coloneqq \rev{\sqrt{\kappa_F}}(1+L_1 \| \bar{B} \rev{\|})$,
    and $\omega_2 \coloneqq L_1 \rev{\sqrt{\kappa_F} \|} (A - I_n) P^{-\frac{1}{2}} \| + L_1^2 \rev{\sqrt{\kappa_F}} \|\bar{B} \| \| P^{-\frac{1}{2}}\|$\rev{, where $\kappa_F = \lambda^+(F)/\lambda^-(F)$}.
    Moreover, closed-loop~\eqref{eq:suboptimal-one-step-dyn} is locally asymptotically stable within ${\Omega}$.
\end{theorem}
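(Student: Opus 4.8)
The plan is to view the coupled dynamics \eqref{eq:suboptimal-one-step-dyn} as a feedback interconnection of two input-to-state stable (ISS) subsystems and to close the loop with the discrete-time small-gain theorem \cite{jiang2004nonlinear}. The state subsystem is ISS with respect to the suboptimality input $\boldsymbol{e}^\ell_t$ by Theorem~\ref{thm:optimal-ISS}, with asymptotic gain proportional to $\gamma_1\|\bar{B}\|$; the optimizer-error subsystem is ISS with respect to the increment $\Delta x_t$ by Theorem~\ref{thm:suboptimal-ISS}, with gain $\gamma_2(\ell)$. Since both estimates are valid only while the trajectory remains in the region guaranteeing $x_t\in\Gamma_N$ and $\boldsymbol{e}^\ell_t\in\calE$, I would first prove by induction on $t$ that $\Omega$ is forward invariant, and only then extract asymptotic stability from the interconnection.

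\textbf{Forward invariance.} I would split the inductive step into two claims. For the state coordinate, observe that the error radius $r_e=\gamma_1^{-1}\|\bar{B}\|^{-1}r_N$ coincides exactly with the radius defining $\calE$ in Theorem~\ref{thm:optimal-ISS} (using $\gamma_1=\delta(1-\beta)^{-1}$); hence $\|\boldsymbol{e}^\ell_t\|\le r_e$ implies $\boldsymbol{e}^\ell_t\in\calE$, and the argument inside Theorem~\ref{thm:optimal-ISS} yields $\psi(x_{t+1})\le r_N$, i.e.\ $x_{t+1}\in\Gamma_N$. For the error coordinate, I would feed the one-step form of Theorem~\ref{thm:suboptimal-ISS} a bound on $\|\Delta x_t\|$ read off from the dynamics: writing $\Delta x_t=\big(f(x_{t-1})-x_{t-1}\big)+\bar{B}\boldsymbol{e}^\ell_{t-1}$ and using $f(\xi)-\xi=(A-I_n)\xi+\bar{B}\boldsymbol{\varphi}^\star(\xi)$ together with $\|\boldsymbol{\varphi}^\star(\xi)\|\le L_1\|\xi\|$ (Corollary~\ref{corollary:lipschitz} at $\bar{\xi}=0$) and $\|\xi\|\le\|P^{-\frac12}\|\,\psi(\xi)$, one bounds $\|\Delta x_t\|$ by a multiple of $\psi(x_{t-1})\le r_N$ plus $\|\bar{B}\|\,\|\boldsymbol{e}^\ell_{t-1}\|$. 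Substituting into Theorem~\ref{thm:suboptimal-ISS} and translating between the $F$-weighted and Euclidean norms — the source of the $\sqrt{\kappa_F}$, $\|P^{-\frac12}\|$ and $\|F^{-\frac12}\|$ factors collected into $\omega_1,\omega_2$ — yields a recursion whose multiplier $\tau^\ell(\omega_1+\omega_2\gamma_1\|\bar{B}\|)$ drops below one precisely when $\ell$ exceeds the second branch of $\ell^\star$, preserving $\|\boldsymbol{e}^\ell_{t+1}\|\le r_e$.

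\textbf{Asymptotic stability.} With $\Omega$ invariant, I would combine the two ISS estimates. Eliminating $\Delta x_t$ through the same dynamics identity closes the loop between $x_t$ and $\boldsymbol{e}^\ell_t$: the resulting composite iteration on a weighted measure of $\psi(x_t)$ and $\|\boldsymbol{e}^\ell_t\|_F$ contracts with rate $\tau^\ell\big(1+\gamma_3\gamma_1 L_1\big)$, where $\gamma_3\gamma_1L_1$ is the loop gain after the norm conversions packaged in $\gamma_3=2\|F^{-\frac12}\|\,\|P^{-\frac12}\|$, and the first branch of $\ell^\star$ forces this rate strictly below one. By the small-gain theorem \cite{jiang2004nonlinear} (equivalently, by iterating this contraction) both $x_t\to0$ and $\boldsymbol{e}^\ell_t\to0$, establishing local asymptotic stability on $\Omega$.

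\textbf{Main obstacle.} The crux is that the two ISS theorems are stated with respect to different signals — $\boldsymbol{e}^\ell_t$ for the state and $\Delta x_t$ for the error — so the loop does not close directly. The delicate part is expressing $\Delta x_t$ through $x_{t-1}$ and $\boldsymbol{e}^\ell_{t-1}$ while retaining contraction, and carrying the $\|\cdot\|_F$-to-$\|\cdot\|$ conversions so that the emergent loop gain is exactly the quantity in $\ell^\star$. Simultaneously, forward invariance and the small-gain contraction each impose a lower bound on $\ell$ that must be made compatible, which is why $\ell^\star$ is the maximum of the two branches.
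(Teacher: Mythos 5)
Your proposal matches the paper's proof in both structure and substance: forward invariance is established exactly as you describe (reusing Theorem~\ref{thm:optimal-ISS}'s argument for the state coordinate, and feeding the one-step bound of Theorem~\ref{thm:suboptimal-ISS} with the decomposition $\Delta x_{t+1}=(A-I_n)x_t+\bar{B}\boldsymbol{\varphi}^\star(x_t)+\bar{B}\boldsymbol{e}^\ell_t$ to obtain the recursion with multiplier $\tau^\ell(\omega_1+\omega_2\gamma_1\|\bar{B}\|)$), and asymptotic stability then follows from chaining the two ISS gains and invoking the small-gain theorem with the first branch of $\ell^\star$. No gaps; this is essentially the same argument, down to the norm-conversion factors you identify as the source of $\sqrt{\kappa_F}$ and $\gamma_3$.
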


\begin{proof}
    First,  we show that the set $\Omega$
    is forward invariant under the dynamics \eqref{eq:suboptimal-one-step-dyn} for large enough $\ell$. Consider some $(x_t, \boldsymbol{\varphi}_t^\ell) \in \Omega$. As shown in the proof of Theorem \ref{thm:optimal-ISS}, $\psi(x_{t+1})\leq r_N$. To show that  $\|\boldsymbol{e}^{\ell}_{t+1}\|\leq r_e$, consider the one-step result from Theorem \ref{thm:suboptimal-ISS}
    \begin{align*}
         &\| \boldsymbol{e}^{\ell}_{t+1} \|_F \leq \tau^\ell \|\boldsymbol{e}^{\ell}_{t}\|_F + \tau^\ell L_1\rev{\|F^{\frac{1}{2}}\|}\|\Delta x_{t+1}\|.
         \end{align*}
    Using \eqref{eq:suboptimal-one-step-dyn}   
         \begin{align*} \rev{\|\Delta x_{t+1}\| \leq  \|(A-I_n)x_t\| + \|\bar{B}{\boldsymbol{\varphi}}^\star(x_t)\| + \|\bar{B}\boldsymbol{e}^{\ell}_t\|.}
    \end{align*}
    Combining the two, and using submultiplicativity property of norms and Corollary \ref{corollary:lipschitz}, it holds that
    \begin{align*}
        \| \boldsymbol{e}^{\ell}_{t+1} \| &\leq \tau^\ell \omega_1 \|\boldsymbol{e}^{\ell}_t\| + \tau^\ell \omega_2 \|x_t\|_P \leq \tau^\ell\left(\rev{\frac{\omega_1}{\gamma_1 \| \bar{B}\|}} + \rev{\omega_2}\right) r_N.
    \end{align*}
    Then, for $\ell\geq \ell^\star$, it holds that $\|\boldsymbol{e}^\ell_{t+1}\|\leq r_e$ showing the forward invariance of $\Omega$ and therefore $\{\boldsymbol{e}^\ell_i\}_{i=0}^t \in \mathcal{E}$. For detailed steps of the proof and derivation of constants $\omega_1$ and $\omega_2$, please refer to Appendix Section~\ref{app:thm-4}.
    To show the asymptotic stability, consider
    \begin{align*}
    \lim\limits_{t \rightarrow \infty} \sup\limits_{t \geq 0} \Vert \Delta x_t \rev{\Vert} &\leq 2\rev{\Vert} P^{-\frac{1}{2}} \Vert \lim\limits_{t \rightarrow \infty} \sup\limits_{t \geq 0} \Vert x_t \Vert_{P}\\
    &\leq 2\gamma_1\|F^{-\frac{1}{2}}\|\rev{\Vert}P^{-\frac{1}{2}} \Vert \lim\limits_{t \rightarrow \infty} \sup\limits_{t \geq 0}\|\boldsymbol{e}^\ell_t\|\rev{_F},
    \end{align*}
    where the first inequality follows from  the triangle inequality and the definition of weighted norms, and the second  from Theorem~\ref{thm:optimal-ISS}.
Then, by Theorem~\ref{thm:suboptimal-ISS}
\begin{equation*}
    \lim\limits_{t \rightarrow \infty} \sup\limits_{t \geq 0} \Vert \boldsymbol{e}^{\ell}_t \Vert_F \leq  \gamma_2(\ell) \lim\limits_{t \rightarrow \infty} \sup\limits_{t \geq 0} \Vert \Delta x_t \rev{\Vert}.
\end{equation*}
Combining the two
\begin{equation*}
    \lim\limits_{t \rightarrow \infty} \sup\limits_{t \geq 0} \Vert \boldsymbol{e}^{\ell}_t \Vert_F \leq \gamma_3 \gamma_1 \gamma_2(\ell) \lim\limits_{t \rightarrow \infty} \sup\limits_{t \geq 0} \Vert \boldsymbol{e}^{\ell}_t \Vert_F.
\end{equation*}
Using the small gain theorem \cite[Theorem 1]{jiang2004nonlinear} and the fact that $F\succ 0$, the system \eqref{eq:suboptimal_closed-loop2} is asymptotically stable if $\gamma_3 \gamma_1 \gamma_2(\ell) < 1$ which holds for $\ell\geq \ell^\star$.
\end{proof}

\rev{\begin{remark}
    Note that  the constants appearing in Theorem \ref{thm:asympt-stable-suboptimal-cl} can be computed exactly in the absence of state constraints: $\tau$ and $F$ can be estimated as detailed in \cite{nishihara2015general}, the value of $L_1$ follows by extending \cite[Corollary 2]{liao2021analysis} and the rest of the constants depend on known system parameters. With state constraints present, one can exploit the piecewise affine structure of the MPC policy~\cite{bemporad2002explicit} to derive an explicit expression for $L_1$; we do not explore such an analysis in this work. To further clarify how Theorem~\ref{thm:asympt-stable-suboptimal-cl} is used in practice, we provide a summarized explanation in Appendix Section~\ref{app:thm-4-summary}.
\end{remark}}

\section{Numerical Simulations and Discussion}\label{sec:num-sim}

\rev{To show the closed-loop behavior of the proposed ADMM-based suboptimal MPC scheme \eqref{eq:linear_ocp_admm}, we consider the following double integrator system}
\begin{equation}\label{eq:double_integrator}
    x_{t+1} = \begin{bmatrix}
        1 & 1 \\
        0 & 1
    \end{bmatrix}x_t + \begin{bmatrix}
        0 \\
        1
    \end{bmatrix} u_t,
\end{equation}
\rev{with}  $\mathcal{U} = [-0.5, 0.5]$ and  $\mathcal{X} = [-5, 5] \times [-5, 5]$. The MPC parameters are set \rev{to} $Q = I_2$, $R = 10$, and $N = 3$\rev{, and the ADMM parameters are fixed to $\alpha = 1.95$ (by bisecting values in $(0,2)$ as in \cite{nishihara2015general}), $\epsilon = 0$, and $\rho = 50$, resulting in $\tau=0.69$ and  $\kappa_F=39$. We verify that for this choice of $Q$ and $R$, $\kappa=88.76$ is sufficiently large (matrix $M$ in \cite[Thm. 7]{nishihara2015general} is positive semidefinite) and thus Proposition \ref{prop:admm-convergence-thm6} holds.  The Lipschitz constant $L_1=286.83$ is calculated as if no state constraints are present by extending the bound in \cite[Corollary 2]{liao2020time} to also include the dual variable in $\boldsymbol{\varphi}^\star$.}

\rev{The value for $\ell^\star$ from Theorem \ref{thm:asympt-stable-suboptimal-cl} is then estimated to be $85$. In practice, we note that a lower number for $\ell$ suffices for stability, as also noted in \cite{liao2020time}. The initial $\boldsymbol{\varphi_0}^\ell$ is taken to be $0$, and it is assumed to be such that the system optimizer dynamics are initialized within $\Omega$. The design of initial parameters such that the ROA initialization is satisfied is outside of the scope of this work.} In particular, we run the algorithm for a number of initial states and with $\ell = 23, 27, 30$, \rev{as shown in the state plot in Figure \subref*{fig:states-sets}. Using the ISS result in Theorem \ref{thm:optimal-ISS}, and considering a uniform-in-time, but $\ell$-dependent bound for $\|\bar{B}\boldsymbol{e}_t^\ell\|$, we also plot the corresponding robust control invariant (RCI) sets  for the dynamics \eqref{eq:suboptimal-one-step-dyn} for each $\ell$ using the tool from \cite{chen2024robust}. As expected from Theorem \ref{thm:suboptimal-ISS}, the RCI size increases with additional ADMM iterations illustrating a computation-performance tradeoff.} We also plot the input trajectories for the same values of $\ell$ and the optimal MPC in Figure~\subref*{fig:inputs}, showing that input constraints are always satisfied. To further demonstrate that our main result is a first of its kind for showing the proof of local asymptotic stability for nonlinear system optimizer dynamics, we provide additional numerical comparisons with a baseline~\cite{schulze2021closed} in Appendix Section~\ref{app:sim}.

\begin{figure}[t]
    \centering
    \subfloat[The size of the robust control invariant (RCI) set grows as the number of ADMM iterations $\ell$ is increased. The largest set is the disturbance-free maximal control invariant  set.\label{1a}]{
        \centering  
        \includegraphics[width=0.47\columnwidth]{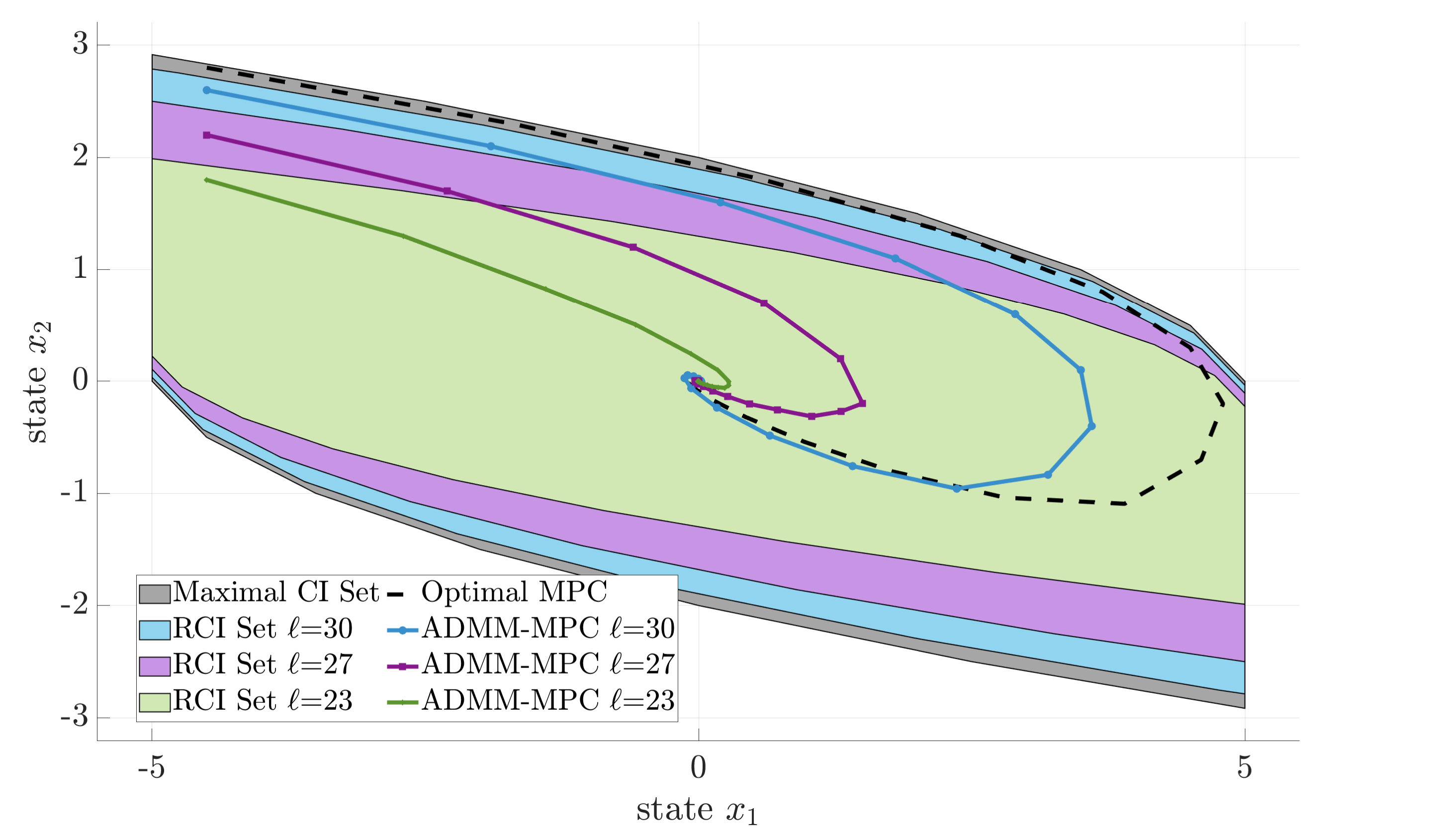}
        \label{fig:states-sets}
    } \hfill
    \subfloat[The input trajectory satisfies the constraints by construction for varying number of ADMM iterations $\ell$.\label{1b}]{
        \centering
        \includegraphics[width=0.47\columnwidth]{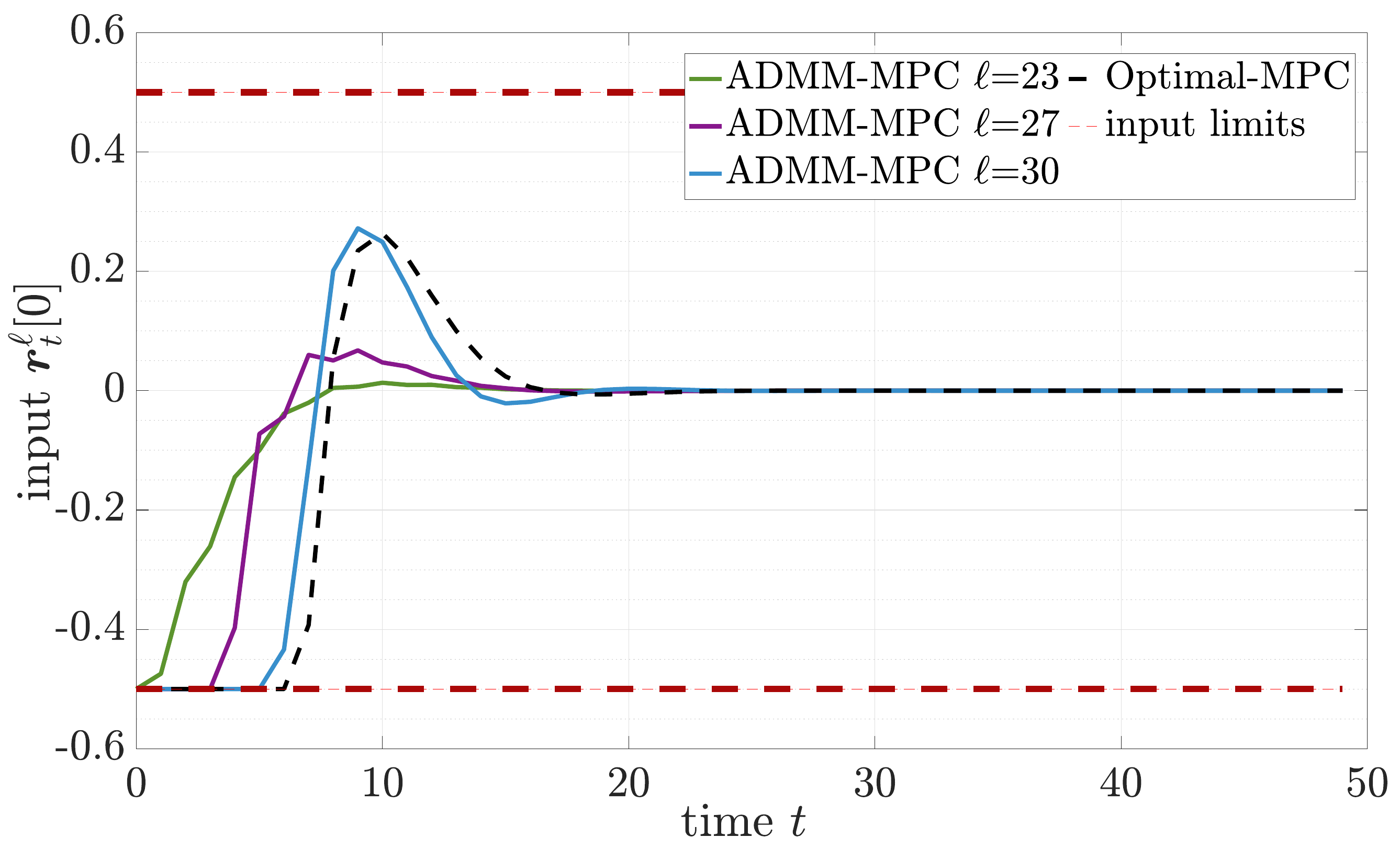}
        \label{fig:inputs}
    }
    \caption{Performance of ADMM-based suboptimal MPC for the system \eqref{eq:double_integrator} for varying number of ADMM iterations $\ell$.}    
    \label{fig:di-closed-loop}
\end{figure}

\section{Conclusions}
We study the constrained LQR problem \rev{in the context of MPC} and present a computationally efficient, ADMM-based algorithm to solve the \rev{OCP at each time step}. \rev{We derive an explicit, albeit conservative estimate for the number of ADMM iterations required for the stability of the resulting combined system-optimizer dynamics within a local ROA, and provide numerical simulations to validate the theoretical results.}
Apart from \rev{the} computational efficiency, such a split can also be of importance when layered control architectures are used in practice. The suggested split separates dynamics and feasibility constraints, which aligns with decision-making frameworks used in autonomous motion planning for robots. This offers a formal approach to addressing stability and feasibility in compute-constrained layered architectures for autonomous systems. Possible extensions include the \rev{derivation of tighter bounds on number of iterations, and the consideration of the} reference tracking problem and \rev{nonlinear system dynamics.}

\bibliographystyle{ieeetr}
\bibliography{lcsys}

\appendix

\section{Derivation of Proposition~\ref{prop:admm-convergence-thm6}}\label{app:prop1}
We bisect on values of $\alpha \in (0, 2)$ to find its maximum value for which the linear matrix inequality (LMI) in \cite[Theorem 6]{nishihara2015general} is satisfied, ensuring the algorithm's feasibility for our problem parameters. Once $\alpha$ is determined, we provide the details of the derivation of Proposition~\ref{prop:admm-convergence-thm6} here. Following~\cite{nishihara2015general}, we rewrite the iterations of the optimizer as a discrete-time dynamical system on state sequence $\xi^k$, input sequence $\nu^k$, and output sequences $w_1^{k}$ and $w_2^{k}$ satisfying recursions as
\begin{subequations}\label{eq:admm-dyn-recursion}
    \begin{align}
        \xi^{k+1} = (\hat{A} \kron I_s) \xi^k + (\hat{B} \kron I_s)\nu^k \label{eq:admm-state-recursion} \\
        w_1^{k} = (\hat{C}_1 \kron I_s) \xi^k + (\hat{D}_1 \kron I_s) \nu^k \label{eq:admm-w1-recursion} \\
        w_2^{k} = (\hat{C}_2 \kron I_s) \xi^k + (\hat{D}_2 \kron I_s) \nu^k \label{eq:admm-w2-recursion}
    \end{align}
\end{subequations}
where $\xi^k = \begin{bmatrix}
    \boldsymbol{r}^k \\ \boldsymbol{v}^k
\end{bmatrix}$, and the corresponding matrices are
\begin{equation*}
    \hat{A} = \begin{bmatrix}
        1 & 1-\alpha \\
        0 & 0
    \end{bmatrix}, \quad \hat{B} = \begin{bmatrix}
        -\alpha & -1 \\
        0 & 1
    \end{bmatrix}.
\end{equation*}

Next, for $J, g$ we define the sequences $(w_1^k)$ and $(w^k_2)$ via
\begin{equation*}
    w^k_1 = \begin{bmatrix}
\boldsymbol{r}^{k+1} \\
\beta^{k+1}
\end{bmatrix} \quad \text{and} \quad w^k_2 = \begin{bmatrix}
\boldsymbol{u}^{k+1} \\
\gamma^{k+1}
\end{bmatrix}.
\end{equation*}
such that there exist sequences $(\beta^k)$ and $(\gamma^k)$ with $\beta^k = \nabla (\rho^{-1} f)(\mathbf{r}^k)$ and $\gamma^k \in \partial (\rho^{-1}g)(\mathbf{z}^k)$ where $(\nu^k) = (\beta^{k+1}, \gamma^{k+1})$. Then the sequences $(\xi^k)$, $(\nu^k)$, $(w^k_1)$, and $(w^k_2)$ satisfy~\eqref{eq:admm-w1-recursion} and~\eqref{eq:admm-w2-recursion} with matrices
\begin{align*}
    \hat{C}^1 &= \begin{bmatrix}
        1 & -1 \\
        0 & 0
    \end{bmatrix}, \quad \hat{C}^2 = \begin{bmatrix}
        1 & 1-\alpha \\
        0 & 0
    \end{bmatrix}, \\ 
    \hat{D}^1 &= \begin{bmatrix}
        -1 & 0 \\
        1 & 0
    \end{bmatrix}, \quad \hat{D}^2 = \begin{bmatrix}
        -\alpha & -1 \\
        0 & 1
    \end{bmatrix}.
\end{align*}

For sufficiently large $\kappa = L/p$ and
$\rho = (p L)^{\frac{1}{2}} \rho_0$ where $\rho_0 = \kappa^\epsilon > 0$, we have that Proposition~\ref{prop:admm-convergence-thm6} holds satisfying the LMI we describe below with $\bar{F} = \begin{bmatrix}
    1 & 1 - \alpha \\
    1 - \alpha & 1
\end{bmatrix}$ and convergence rate $\tau = 1 - \frac{\alpha}{2 \kappa^{0.5 + |\epsilon|}}$. 

Let $J$ be $p-$strongly convex and $L-$smooth function, and fix the update rate to $\rho=\sqrt{pL}\rho_0$ for some $\rho_0 = \kappa^\epsilon >0$. For a fixed $\tau = 1 - \frac{\alpha}{2 \kappa^{0.5 + |\epsilon|}} \in (0,1)$, if there exists a $\bar{F}\in \mathbb{S}^2_{++}$ and constants $\lambda^1,\lambda^2>0$, such that the following LMI is satisfied
    \begin{equation*}
        \begin{bmatrix}
            \hat{A}^\top \bar{F}\hat{A} - \tau^2\bar{F} & \hat{A}^\top \bar{F}\hat{B}\\
            \hat{B}^\top \bar{F}\hat{A}& \hat{B}^\top \bar{F}\hat{B}
        \end{bmatrix}+
        \begin{bmatrix}
            \hat{C}^1 & \hat{D}^1\\
            \hat{C}^2 & \hat{D}^2
        \end{bmatrix}^\top
        \begin{bmatrix}
            \lambda^1M^1& 0\\
            0& \lambda^2M^2
        \end{bmatrix}
        \begin{bmatrix}
            \hat{C}^1 & \hat{D}^1\\
            \hat{C}^2 & \hat{D}^2
        \end{bmatrix}\preceq 0,
    \end{equation*}
    where $\hat{A}, \hat{B}, \hat{C}^1, \hat{C}^2$ are defined above and $M^1$ and $M^2$ are given by
    \begin{equation*}
    M^1 = \begin{bmatrix}
        -2\kappa^{-2\epsilon} & \kappa^{-1/2-\epsilon} + \kappa^{1/2-\epsilon} \\
        \kappa^{-1/2-\epsilon} + \kappa^{1/2-\epsilon} & -2
    \end{bmatrix}, \quad M^2 = \begin{bmatrix}
        0 & 1 \\
        1 & 0
    \end{bmatrix}.
\end{equation*}
Then for all $t\geq0$, and $x_t$ generated by our ADMM algorithm~\eqref{eq:linear_ocp_admm} satisfies
    \begin{equation*}
    \left \|{\boldsymbol{\varphi}}_t^\ell - {\boldsymbol{\varphi}}^\star(x_{t}) \right \|_{F} \leq \tau^{\ell}  \left\|{\boldsymbol{\varphi}}_{t-1}^\ell -  {\boldsymbol{\varphi}}^\star(x_{t}) \right \|_{F},
    \end{equation*}
where $F = \bar{F} \kron I_{2s}$.

\section{Detailed steps in the proof of Theorem~\ref{thm:asympt-stable-suboptimal-cl}}\label{app:thm-4}
To show that  $\|\boldsymbol{e}^{\ell}_{t+1}\|\leq r_e$, consider the following inequality from Theorem 3
    \begin{align*}
         \| \boldsymbol{e}^{\ell}_{t+1} \|_F &\leq \tau^\ell \|\boldsymbol{e}^{\ell}_{t}\|_F + \tau^\ell L_1{\|F^{\frac{1}{2}}\|}\|\Delta x_{t+1}\| \\
         &\leq \tau^\ell \|F^{\frac{1}{2}}\| \|\boldsymbol{e}^{\ell}_{t}\| + \tau^\ell L_1{\|F^{\frac{1}{2}}\|}\|\Delta x_{t+1}\|
         \end{align*}
    where the second inequality follows from the submultiplicativity property of norms.
    It also holds from (13) that

\begin{equation*}
    x_{t+1} = \underbrace{Ax_t + \bar{B}{\boldsymbol{\varphi}}^\star(x_t)}_{f(x_t)} + \bar{B}\boldsymbol{e}_t^\ell.
\end{equation*}
Subtracting $x_t$ from both sides, it follows that
\begin{equation*}
    \Delta x_{t+1} = x_{t+1} - x_t = (A - I_n)x_t + \bar{B}{\boldsymbol{\varphi}}^\star(x_t)+ \bar{B}\boldsymbol{e}_t^\ell.
\end{equation*}
Taking the norm of both sides
         \begin{align*} \|\Delta x_{t+1}\| \leq  \|(A-I_n)x_t\| + \|\bar{B}{\boldsymbol{\varphi}}^\star(x_t)\| + \|\bar{B}\boldsymbol{e}^{\ell}_t\|.
    \end{align*}
    Combining this with the bound on $\|\boldsymbol{e}_{t+1}^\ell\|_F$ from above, and using submultiplicativity property of norms and Corollary 1, it holds that

\begin{align*}
    \sqrt{\lambda^-(F)} \|\boldsymbol{e}_{t+1}^\ell\| \leq \|\boldsymbol{e}_{t+1}^\ell\|_F &\leq  \tau^\ell \|F^{\frac{1}{2}}\| \|\boldsymbol{e}^{\ell}_{t}\| + \tau^\ell L_1 \|F^{\frac{1}{2}}\|\left(\|(A-I_n)x_t\| + \|\bar{B}{\boldsymbol{\varphi}}^\star(x_t)\| + \|\bar{B}\boldsymbol{e}^{\ell}_t\|\right)\\
    &\leq \tau^\ell \|F^{\frac{1}{2}}\| \|\boldsymbol{e}^{\ell}_{t}\| + \tau^\ell L_1 \|F^{\frac{1}{2}}\| \left(\|(A-I_n)x_t\|+ L_1\|\bar{B}\|\|x_t\| + \|\bar{B}\|\|\boldsymbol{e}_t^\ell\|\right)\\
    &\leq \tau^\ell \|F^{\frac{1}{2}}\|\left(1+L_1 \|\bar{B}\|\right) \|\boldsymbol{e}_t^\ell\| + \tau^\ell L_1 \|F^{\frac{1}{2}}\| \left(\|(A-I_n)x_t\|+ L_1\|\bar{B}\|\|x_t\|\|\right)
\end{align*}
Denoting $\kappa_F$ to be the condition number of $F$, we then divide both sides of the inequality by $\sqrt{\lambda^-(F)}$, and note that $\|x_t\| = \|P^{-\frac{1}{2}}P^{\frac{1}{2}}x_t\| \leq \|P^{-\frac{1}{2}}\| \|x_t\|_P$; it then follows that
    \begin{align*}
        \| \boldsymbol{e}^{\ell}_{t+1} \| &\leq \tau^\ell \underbrace{\sqrt{\kappa_F}\left(1+L_1 \|\bar{B}\|\right)}_{:=\omega_1} \|\boldsymbol{e}^{\ell}_t\| + \tau^\ell \underbrace{L_1\sqrt{\kappa_F} \left(\|(A-I_n)P^{-\frac{1}{2}}\| + L_1 \|\bar{B}\|\|P^{-\frac{1}{2}}\|\right)}_{:=\omega_2} \|x_t\|_P\\
        &= \tau^\ell\omega_1 \|\boldsymbol{e}^{\ell}_t\| + \tau^\ell \omega_2 \|x_t\|_P
    \end{align*}
    Then, using the definition of $\Omega$ and the fact that $\|x_t\|_P \leq \psi(x_t)$ by Lemma 1, it holds that
    \begin{equation*}
        \| \boldsymbol{e}^{\ell}_{t+1} \| \leq \tau^\ell\left(\frac{\omega_1(1-\beta)r_N}{\delta\|\bar{B}\|} + \omega_2 r_N\right) \leq r_e
    \end{equation*}
    if 
    \begin{equation*}
        \ell \geq  -\frac{{\log}{\left(\omega_1 + \frac{\omega_2\delta\|B\|}{1-\beta}\right)}}{\log{\tau}}
    \end{equation*}
    Thus, the forward invariance of $\Omega$ is shown and therefore $\{\boldsymbol{e}^\ell_i\}_{i=0}^t \in \mathcal{E}$ satisfying the condition in Theorem 2.

\section{A summary of our main result in Theorem~\ref{thm:asympt-stable-suboptimal-cl}}\label{app:thm-4-summary}
Since Theorem 4 is our main result and encapsulates all of the bounds and constants used in the previous lemmas and theorems, we provide an explanation below on how these can be computed in practice.

\textbf{Definition of the ROA set $\Omega$:} The forward invariant region of attraction set estimate $\Omega$ consists of two parts: ensuring that the state is in $\Gamma_N$ and that the error due to suboptimality satisfies $\|\boldsymbol{e}^\ell\| \leq \frac{(1-\beta)\sqrt{N d+c}}{\delta \|\bar{B}\|}$. The definition of $\Gamma_N$ comes from \cite{limon2006stability} directly and it is not the focus of our paper to derive an explicitly computable alterative to $\Gamma_N$. In fact, any exponentially stable MPC design (i.e. not necessarily without terminal constraints) shall also work for our analysis. The constant $d$ depends directly on $c$ as we show before (13) and $c$ can be easily computed by solving an LMI. The constant $\beta$ depends on $\delta$ which can be explicitly calculated except for the Lipschitz constant $L_1$ in the general case. Below we explain how one can get an estimate for $L_1$.

\textbf{The expression for $\ell^\star$:}
$\ell^\star$ depends on a number of directly available parameters such as the system matrices $A,B$, the ARE solution $P$, the MPC horizon length $N$ and the cost matrices $Q$ and $R$. It also depends on ADMM parameters such as $F$ and $\tau$ which can be calculated as explained in detail in \cite{nishihara2015general}. Finally, it depends on $L_1$, which we explain how to compute below.

\textbf{The Lipschitz constant $L_1$:}

In the absence of state constraints, it is already known \cite[Corollary 2]{liao2020time} that the optimal primal solution of the OCP (5) is Lipschitz continuous with respect to the parameter $\xi$, and an explicit expression is available. In our notation, this is given as follows: for any $\xi,\bar{\xi}\in \mathbb{R}^n$

\begin{equation*}
    \|\boldsymbol{r}(\xi) - \boldsymbol{r}(\bar{\xi})\| \leq \|H^{-\frac{1}{2}}\|\|H^{-\frac{1}{2}}G\|\|\xi-\bar{\xi}\|
\end{equation*}

From the necessary KKT condition (9), it follows that (we ignore the artificial initial state constraint and the dual variable $\boldsymbol{\lambda}$ since there are no state constraints)

\begin{equation*}
    \begin{bmatrix}
        H\boldsymbol{u} + G\xi + \boldsymbol{y} = 0\\
        -\boldsymbol{y} + \mathcal{N}_{\mathcal{U}^N}(\boldsymbol{r}) \ni 0 \\
        \boldsymbol{u} - \boldsymbol{r} =0
    \end{bmatrix}.
\end{equation*}

Combining this with the optimal primal solution expression, it can be deduced that

    \begin{equation*}
    \|\boldsymbol{y}(\xi) - \boldsymbol{y}(\bar{\xi})\| \leq \left(\|H\|\|H^{-\frac{1}{2}}\|\|H^{-\frac{1}{2}}G\|+\|G\|\right)\|\xi-\bar{\xi}\|.
\end{equation*}

Combining the two  and noting that $ \boldsymbol{\varphi^\star}(\xi)= \left[\boldsymbol{r}^{\star\top}(\xi), \boldsymbol{y}^{\star\top}(\xi)\right]^\top$, it follows that one can take 
\\
$L_1 = \|H^{-\frac{1}{2}}\|\|H^{-\frac{1}{2}}G\|\left(\|H\|+1\right) + \|G\|$.

When state constraints are present, the result from \cite{liao2020time} no longer holds. So one can assume that the system moves away from the state constraints quickly (as is the case in our practical example),
or use other means of deriving such a bound. For example, one can leverage the piecewise affine nature of the optimal MPC policy \cite{bemporad2002explicit} to show the Lipschitz continuity of the primal variable. The result for the dual variable will follow in the exact same way. 

\section{Additional numerical comparisons}\label{app:sim}

\begin{figure}[t]
    \centering
    \begin{subfigure}{0.49\linewidth}
         \includegraphics[width=\textwidth]{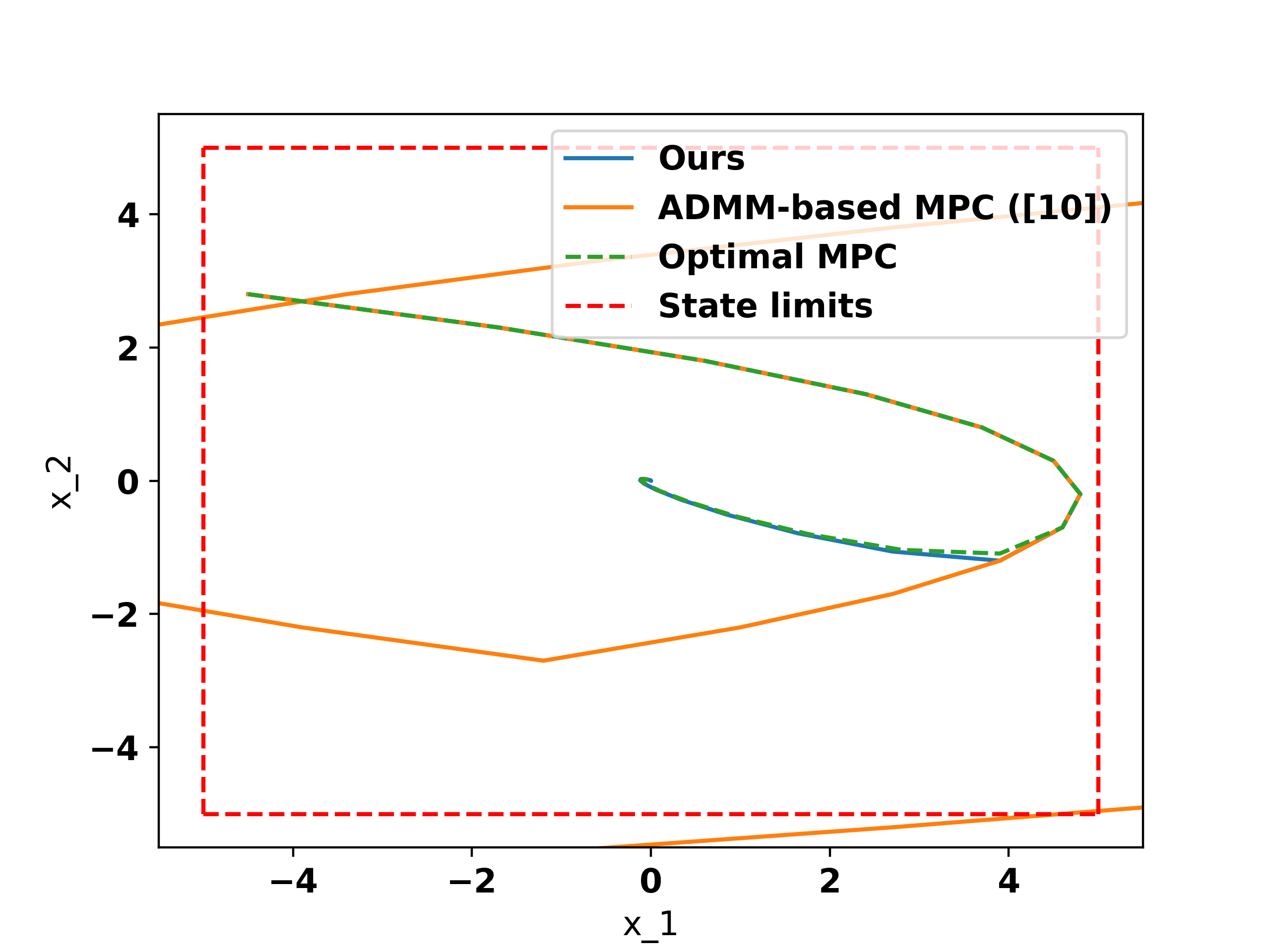}
         \caption{State trajectories\label{fig:states2}}
    \end{subfigure}
    \begin{subfigure}{0.49\linewidth}
         \includegraphics[width=\linewidth]{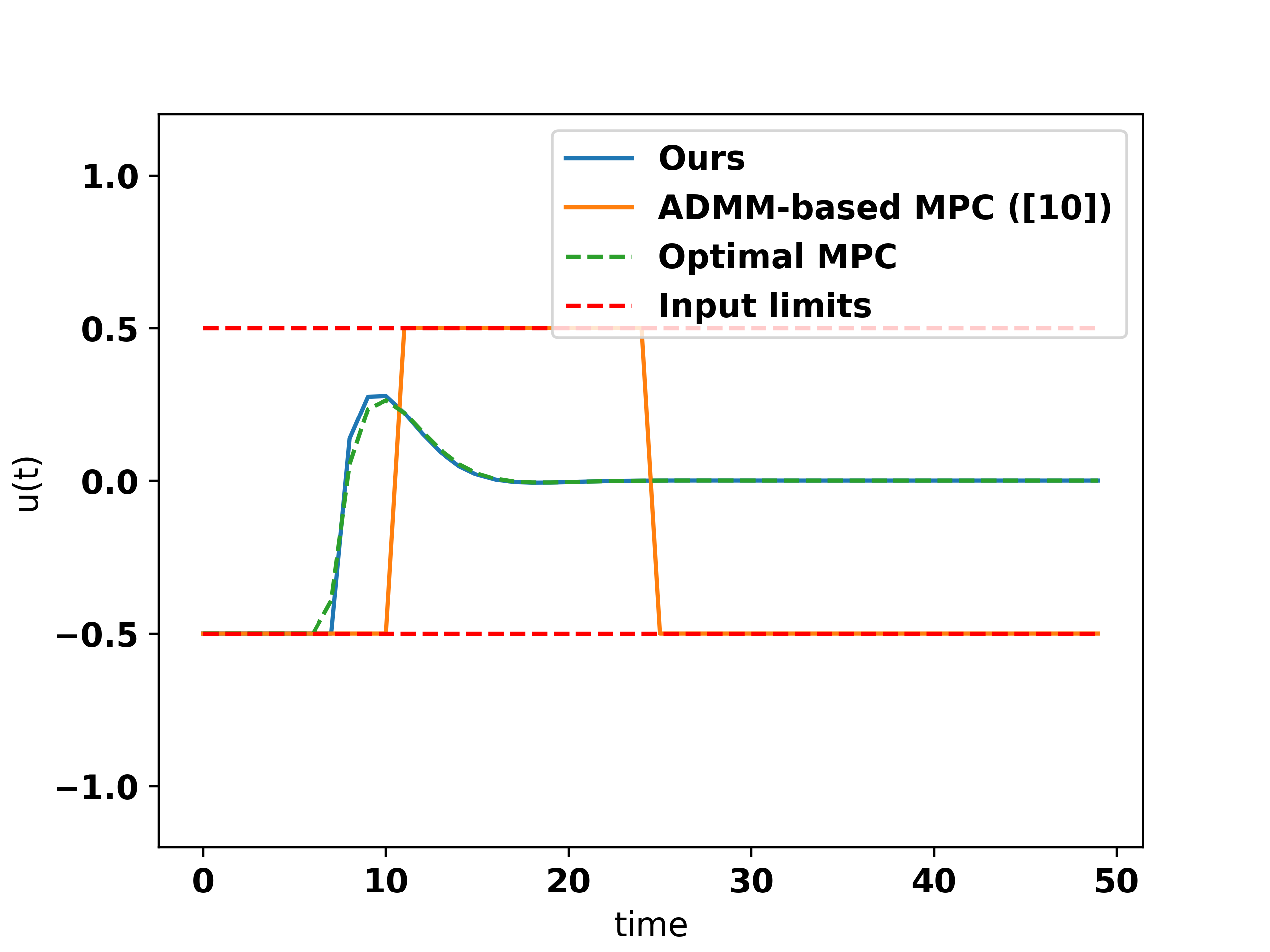}
         \caption{Input trajectories\label{fig:inputs2}}
    \end{subfigure}
    \caption{We provide a comparison of state trajectories in (a) and inputs in (b) computed by our approach and the algorithm from~\cite{schulze2021closed} for the double integrator example in Section V of our paper. The red dotted lines denote the boundaries of state and input constraints. The blue lines are from our approach, the green dotted lines are from the optimal MPC and the orange lines are from~\cite{schulze2021closed}. We run $\ell = 30$ ADMM iterations for our approach.}   
    \label{fig:experiment1}
\end{figure}

To further bolster our theoretical contributions, we provide numerical simulations in Figures~\ref{fig:experiment1} and~\ref{fig:experiment2} comparing our approach with the optimal MPC and a suboptimal MPC scheme from~\cite{schulze2021closed} for linear optimizer-system closed-loop dynamics. We demonstrate that our theoretical analysis, may be conservative, but is nonetheless helpful to design an ADMM-based suboptimal MPC algorithm for nonlinear optimizer-system closed-loop dynamics. In Figure~\ref{fig:experiment1}, we plot the closed-loop state and input trajectories for the double integrator example in Section~\ref{sec:num-sim} under our proposed policy with $\ell=30$, the algorithm from \cite{schulze2021closed} and optimal MPC, all starting from the same initial state $[-4;2.8]$, near the state constraint boundary. Similarly, in Figure~\ref{fig:experiment2}, we compare our approach using the numerical example from~\cite{schulze2021closed}. It can be seen, that our method renders the closed-loop stable, together with the optimal MPC, even though the exact local ROA bound and $\ell^\star$ is not calculated. The method of~\cite{schulze2021closed} that studied the closed-loop ADMM-MPC dynamics only when none of the constraints were active becomes unstable.

\begin{figure}[t]
    \centering
    \begin{subfigure}{0.49\linewidth}
         \includegraphics[width=\textwidth]{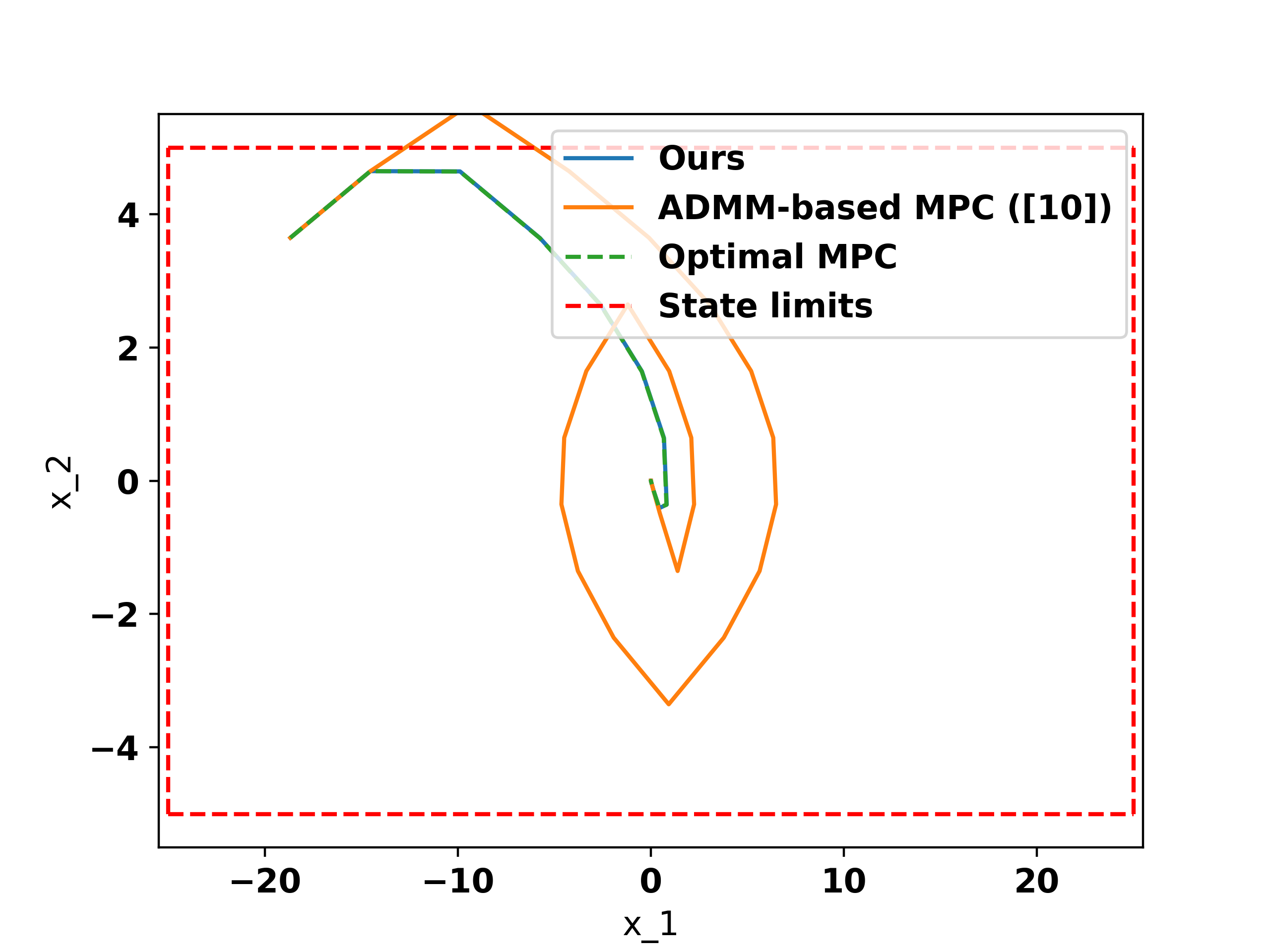}
    \caption{State trajectories\label{fig:states3}}
\end{subfigure}\begin{subfigure}{0.49\linewidth}
         \includegraphics[width=\linewidth]{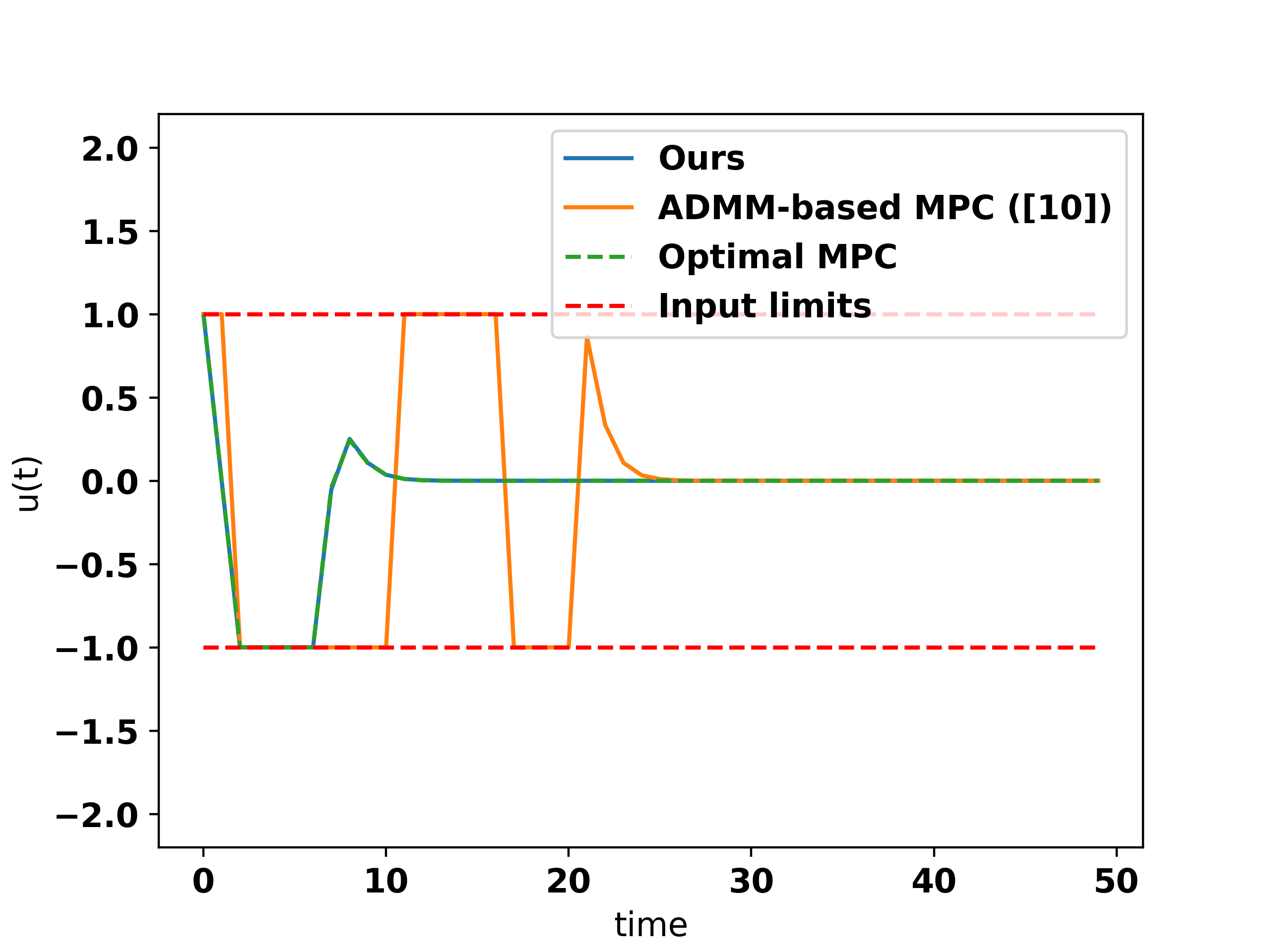}
     \caption{Input trajectories\label{fig:inputs3}}
    \end{subfigure}
    \caption{We provide a comparison of state trajectories in (a) and inputs in (b) computed by our approach and the algorithm from~\cite{schulze2021closed} for the double integrator example from~\cite{schulze2021closed} with $B = [0.5; 1]$. The red dotted lines denote the boundaries of state and input constraints. The blue lines are from our approach, the green dotted lines are from the optimal MPC and the orange lines are from~\cite{schulze2021closed}. We run $\ell = 30$ ADMM iterations for our approach.}   
    \label{fig:experiment2}
\end{figure}

\end{document}